\let\ol=\overline
\numberwithin{equation}{section}
\newtheorem{thm}{Theorem}[section]
\newtheorem{lem}[thm]{Lemma}
\newtheorem{cor}[thm]{Corollary}
\theoremstyle{definition}
\theoremstyle{remark}
\newtheorem{rem}[thm]{Remark}
\newcommand{\un}{\mathbf{1} }
\newcommand{\bP}{\mathbb{P}}
\newcommand{\bE}{\mathbb{E}}
\newcommand{\bS}{\mathbb{S}}
\newcommand{\R}{{\mathbb R}}
\newcommand{\N}{{\mathbb N}}
\newcommand{\Z}{\mathbb Z}
\newcommand{\T}{{\mathbb T}}
\newcommand{\Xc}{{\mathcal X}}
\newcommand{\Ac}{{\mathcal A}}
\newcommand{\bd}{{\rm \mathbf{  d}}}
\let\d=\delta
\let\e=\varepsilon 
\let\l=\lambda
\let\s=\sigma
\let\r=\rho 
\let\ol=\overline
\let \a =\alpha
\let \vf = \varphi
\newcommand{\mL}{\mathcal{L}}
\newcommand{\mA}{\mathcal{A}} 
\newcommand{\mB}{\mathcal{B}} 
\newcommand{\mF}{\mathcal{F}}
\newcommand{\mX}{\mathcal{X}}
\newcommand{\norma}[1]{\left \|#1\right \|}
\newcommand{\tr}{ \operatorname{tr}\, }
\def\bel{\begin{equation}\label}
\def\eeq{\end{equation}}
\def\bem*{\begin{multline*}}
\def\eem*{\end{multline*}}
\begin{document}

\title{Ergodic Mean Field Games with H\"ormander diffusions}


\author{Federica Dragoni
\thanks{Cardiff School of Mathematics, Cardiff University, Cardiff, UK, e-mail: DragoniF@cardiff.ac.uk.} \and
 Ermal Feleqi 
\thanks{Cardiff School of Mathematics, Cardiff University, Cardiff, UK, e-mail:  FeleqiE@cardiff.ac.uk} 
}

\maketitle


\begin{abstract}
 We prove  existence of solutions for a class of systems of subelliptic PDEs 
 arising from Mean Field Game systems with H\"ormander diffusion.
These results are motivated by the feedback synthesis Mean Field Game solutions and the Nash equilibria of a large class of $N$-player differential games. 
 \end{abstract}

\noindent {\bf Keywords:} Mean Field Games, H\"ormander condition, subelleptic PDEs, hypoelliptic.



\section{Introduction} 

In this paper we consider
  a class of systems of degenerate elliptic PDEs of H\"ormander type arising from certain ergodic differential games,  more specifically, from the Mean Field Game (MFG) theory of
   J.M. Lasry and P.L. Lions \cite{LL-cs, LL-hf, LL-jj}. 
   These systems have been introduced to model differential games with a large number of players or agents with dynamics described by controlled diffusion processes, under simplifying features such as homogeneity of the agents and a coupling of Mean Field type. 
This allows to carry out a kind of  limit procedure as the number of agents tends to infinity which leads to simpler effective models.
Lasry and Lions have shown that for a large class of differential games (either deterministic or stochastic)   the limiting model reduces to   a  Hamilton-Jacobi-Bellman equation for the optimal value function of the typical agent coupled with a continuity (or Fokker-Planck) equation for the density of the typical optimal dynamic, the so-called Mean Field Game equations.  Solutions to these equations can be used to construct approximated Nash equilibria for games with a very large but still finite number of agents.
The rigorous proof of  the limit behaviour in this sense has been established by Lasry and Lions in~\cite{LL-cs,LL-jj} for ergodic differential games and extended by one of the authors
to several homogeneous populations of agents \cite{Fe13}. The time-dependent  case with nonlocal coupling has been addressed in a general context by \cite{clld, CarLoc}.
For a general overview on Mean Field Games, we refer the reader
to the lecture notes of 
Gu\'eant, Lasry, and Lions  \cite{GLL}, Cardaliaguet \cite{Card-notes}, the lecture videos
of P.-L. Lions at his webpage at  Coll\`ege de France, 
the first papers of Lasry and Lions~\cite{LL-cs, LL-hf, LL-jj} and of M. Huang, P.E. Caines, R.P. Malham\'e~\cite{HCM06}, \cite{HCM07},
the survey paper \cite{GS}, the  books by Gomes and collaborators~\cite{Gomesbook, gn}
and by Bensoussan, Frehse and Yan~\cite{BFY},
the two special issues \cite{BCC, BCC2}
 and the recent paper~\cite{clld} on the master equation and its application to the convergence of games with a large population to a MFG.
 For applications to economics see e.g. \cite{AGLL}, \cite{CarmonaBook}, \cite{gn}, \cite{GLL},
\cite{LL-sp}, \cite{LLLL}.
From the mathematical side, there are several important questions related to both the convergence and then the study of the limit MFG system itself, e.g. long time behaviour
\cite{Card-long-time,CLLP-long-time-average}, 
first order systems \cite{Card-fo-weak,Card-fo}, ergodic MFG systems \cite{Cir-stat-16,
BF16,gomi,gpv},
for time-dependent systems see also \cite{
gomes2015time},
 or homogenisation \cite{CDM}.  For further contributions see also  
\cite{achdou2017mean, ABC, Card-deg-diffusion, 
gomes2013continuous}. 
 The literature on Mean Field Games is very vast so the previous list is only partial and we refer to the references therein for a more extended bibliography. 
 	 
The novelty of this paper  consists in assuming that the dynamic of the average player is  a  diffusion of H\"ormander type and hence the  differential operators  arising in the system   are  degenerate: the second order operator  is  
not elliptic but 
only subelliptic.
Roughly speaking this means that the operators are elliptic 
only along  certain directions of  derivatives. Nevertheless the H\"ormander condition ensures that the Laplacian induced by these selected derivatives  is hypoelliptic. 
From the perspective of a single agent this means that the state cannot change in all directions, but the agent can move only along admissible directions: a subspace of the tangent space. This subspace depends on the state (position) of the agent.
Similarly the growth conditions on the Hamiltonian are restricted to some selected directions of derivatives.
 This extension  is not trivial and relies on recent deep achievements in the theory of H\"ormander operators and subellipitc quasilinear equations. When the known regularity  results will not be sufficient  to proceed, we  will use heat kernel estimates to overcome the problem. Moreover the techniques used here are  different from the standard elliptic case and can also be used in other contexts to gain a-posteriori regularity. \par
Hamilton-Jacobi equations  in the context of H\"ormander regularity have been extensively studied, see  e.g. \cite{{BCP}, {Capuzzo-Ishii},{Cutri},{DDMM},{DragoniHopf}}, in particular because of the intriguing connection between the PDE theory and the underlying geometry induced by the admissible directions.
This paper is to our knowledge the first one that connects these two recent and active areas.

We next state our main results:\\
\noindent 1 -  Under suitable assumptions (see Section 3)
and  assuming in particular that the Hamiltonian grows at most quadratically in the subgradient, we prove that there  exists a solution $(u, m) \in C_{\mX}^2(\T^d)  \times C(\T^d)$ of the system
$$
\left\{
\begin{aligned}
&\mL u + \rho u +  H(x, D_\Xc u) = V[m]   \\
&\mL^*m - \textrm{div}_{\Xc^* }( m g ( x, D_\Xc u ) )   = 0  \\
&\int_{\T^d} m \,dx  =1 ,
 \quad  m > 0,  
\end{aligned} \right. 
$$
where  $D_\Xc u$ is a subgradient associated to a family of H\"ormander vector fields (e.g. $D_\Xc u=\left(u_x-\frac{y}{2}u_z,u_y+\frac{x}{2}u_z\right)^T$ on $\R^3$ in the Heisenberg case) and $\mL $ is a hypoelliptic  operator, $\mL^* $ is the dual operator of  $\mL $ and  $\textrm{div}_{\mX^*} $ is the corresponding divergence operator.  Moreover by
$C_\Xc^2(\T^d)$ we indicate the sets of functions whose first and second derivatives in the selected directions exist and are continuous (see Section 2 for more formal definitions).\\
2 - Under suitable assumptions (see Section 4)
and assuming in particular that the Hamiltonian grows at most linearly in the subgradient, 
we prove that there exists a  solution $(\l, u, m) \in \R\times C_\Xc^2(\T^d)  \times C(\T^d)$ of 
the system 
$$
\left\{
\begin{aligned}
&\mL u + \l+  H(x, D_\Xc u) = V[m]   \\
&\mL^*m - \textrm{div}_{\mX^*} \big( m g ( x, D_\Xc u ) \big)   = 0  \\
& \int_{\T^d} u \, dx =0, \quad \int_{\T^d} m\, dx =1 ,
 \quad  m > 0 . 
\end{aligned} \right. 
$$
We  also show uniqueness for both the systems under standard monotonicity assumptions.\\
Those results are applied to the feedback synthesis of MFG solutions and of Nash equilibria of a
 large class of $N$-player differential games. 

The paper is organised as follows:
in Section 2 we introduce the H\"ormander condition and the corresponding first and second order operators 
and we state several regularity results and estimates which will be key in the proofs of our main results.
 In Section  3 we show existence for a stationary MFG system for at most quadratic Hamiltonians by a fixed-point argument in the presence of a regularisation. 
In Section  4 we remove this regularisation for Hamiltonians of at most linear growth and prove our main existence result.
In the Appendix  we show the convergence of Nash-equilibria as motivation for the MFG system studied. Since these results are very well-known in the non degenerate case and they do not lead to any substantial technical difference in the H\"ormander case, we will omit the proofs, only reporting briefly the results.

{\bf Acknowledgments:} 
The authors were supported by the EPSRC Grant ``Random Perturbations of ultra-parabolic PDEs under rescaling''.
The authors would like to thank  Nicolas Dirr and Pierre Cardaliaguet for the many interesting conversations and suggestions.

\section{Preliminaries and notations}
Let us consider $x\in \T^d$ the $d$-dimensional torus and $\mX=\{X_1, \ldots, X_m \}$ a family of smooth vector fields defined on $\T^d$ 
 satisfying the H\"ormander condition, 
i.e. 
\begin{equation}
\label{(I)}
Span\bigg(\mathcal{L}\big(X_1(x),\dots,X_m(x)\big)\bigg)=T_x \T^d \equiv \R^d,\quad \forall\, x\in \T^d,
\end{equation}
where $\mathcal{L}\big(X_1(x),\dots,X_m(x)\big)$ denotes the Lie algebra induced by the given vector fields and by $T_x\T^d$ we denote the tangent space at the point $x\in \T^d$. For more details on H\"ormander vector fields  we refer    to \cite{montgomery}.
Given a family of vector fields  $\mX=\{X_1, \ldots, X_m \}$ and  $u:\T^d\to \R$, we define:
\begin{align}
&\label{gradient} D_{\Xc} u=(X_1 u,\dots,X_m u)^T\in \R^m,\\
&
\label{sub_laplacian} \mL u = - \frac{1}{2} \sum_{j=1}^m X_j^2u\in \R.
\end{align}
For any vector-valued function $g:\T^d\to \R^m$, we will consider the divergence induced by the vector fields  $\mX=\{X_1, \ldots, X_m \}$, that is
\begin{equation}
\label{DIVERGENCE} 
  \textrm{div}_{\Xc}g=X_1\,{g_1}+\dots +X_m \,{g}_m,
\end{equation}
  where  $g_i$ indicates the $i$-component of $g$, for $i=1,\dots,m$.
  In particular,  later on, we will consider the divergence $
  \textrm{div}_{\Xc^*}g
$ induced by the dual vector fields $X^*_i=-X_i-\textrm{div}X_i$ where $\textrm{div}X_i$ indicate the standard  (Euclidean) divergence of the vector fields $X_i:\T^d\to \R^d$, for $i=1,\dots,m$.
Given the family of vector fields $\mX=\{X_1, \ldots, X_m \}$ we recall that any absolutely continuous curve $\gamma:[0,T]\to \T^d$ is called horizontal (or admissible) if there exists a measurable function $\alpha:[0,T]\to \R^m$ such that
\begin{equation}
\label{horizontalCurves}
\dot{\gamma}(t)=\sum_{i=1}^m\alpha_i(t)X_i(\gamma(t)),\quad \textrm{a.e.}\; t\in (0,T),
\end{equation}
where $\alpha_i(t)$ is the $i$-component of $\alpha(t)$ for $i=1,\dots,m$.\par
For all horizontal curves it is possible to define the length as:
$$
l(\gamma)=\int_0^T\sqrt{\sum_{i=1}^m\alpha_i^2(t)}\;dt.
$$
The {\em Carnot-Carath\'eodory distance} induced by the  family $\mX=\{X_1, \ldots, X_m \}$ is  denoted by $d_{CC}(\cdot,\cdot)$, and defined as
$$
d_{CC}(x,y)=\inf\left\{
l(\gamma)\,|\, \gamma \; \textrm{satisfying \eqref{horizontalCurves} with}\; \gamma(0)=x, \gamma(T)=y
\right\}.
$$
The H\"ormander condition implies that the distance $d_{CC}(x,y)$ is finite and continuous w.r.t. the original Euclidean topology induced on $\T^d$ (see e.g. \cite{montgomery}).
It is also known that  
 there exists $C> 0$ such that
\begin{equation}
\label{local_estimates_distances}
C^{-1}|x-y|\le d_{CC}(x,y) \le C |x-y|^{1/k}   
\end{equation}
 for all $x,y \in \T^d$, where $k\in \N$ is the  step, i.e. the maximum of the degrees of the iterated brackets occurring in  the 
fulfillment of the  H\"ormander condition, see \cite{NSW86}. 
It was proved in \cite[Lemma~5]{Sa-Ca84} and independently in \cite{NSW86} that there exists some $Q>0$, called the {\em homogenous dimension}, such that, for all $\d >0$ sufficiently small  
and for some $C>0$, 
\[
 C^{-1} \delta^Q \le  |B_{d_{CC}} (x, \d) | \le C \delta^Q,
\] 
for all $x\in \T^d$, where $B_{d_{CC}} (x, \d)$ is the ball of centre $x$ and radius $\delta$ w.r.t. the distance 
$d_{CC}$  and, for any $B\subset \T^d$, $|B|$ denotes the standard Lebesgue measure  
of $B$. 

\subsection{H\"older spaces and H\"older regularity  estimates}
Next we recall the definition of H\"older and Sobolev spaces associated to the  
 family of vector fields $\mX$ (we refer to \cite{Xu-sem-94} and \cite{XuZu97} for more details on these spaces).
For every multi-index $J=(j_1 , \ldots, j_m) \in \Z_+^m$ let 
$\mX^J = X_{j_1} \cdots X_{j_m}$. The {\em length} of a multi-index $J$ is $|J|= j_1+ \cdots +j_m$, thus 
$\mX^J$ is a linear differential operator of order $|J|$.  
For $r\in \N$  and $\alpha \in (0, 1)$ we define the function spaces 
\[
C_{\mX}^{0, \alpha}(\T^d) = \left\{  u \in L^{\infty }(\T^d) \; : \; \sup_{ \substack{x, y \in \T^d \\x\neq y} } \frac{|u(x) - u(y)|  }{d_{CC}(x,y)^\alpha } <\infty  \right\}, 
\]
\[
C_{\mX}^{r, \alpha}(\T^d) = \left\{  u \in L^{\infty }(\T^d) \; : \; \mX^J u \in C_{\mX}^{0, \alpha}(\T^d) \;\; \forall |J| \le r\right\}. 
\]
For any function $u\in C_{\mX}^{0, \alpha}(\T^d) $ one can define a seminorm as
$$
[ u ]_{C_{\mX}^{0, \alpha}(\T^d)}= \sup_{ \substack{x, y \in \T^d \\x\neq y} } \frac{|u(x) - u(y)|  }{d_{CC}(x,y)^\alpha },
$$
and, for every  $u\in C_{\mX}^{r, \alpha}(\T^d)$, the norm is defined as 
$$
\norma{u}_{C_{\mX}^{r, \alpha}(\T^d)}=  \norma{u}_{L^\infty(\T^d)}  +\sum_{1\leq |J|\leq r} [\mX^J u
]_{C_{\mX}^{0, \alpha}(\T^d)}.
$$
Endowed with the above norm, $C_{\mX}^{r, \alpha}(\T^d) $ are Banach spaces for any $r\in \N$ and $\alpha\in (0,1)$. \par
From estimates \eqref{local_estimates_distances}, it follows immediately  
\begin{equation}
\label{Holder-relations}
C^{-1}\norma{u}_{C^{0, \frac{\alpha}{k}}(\T^d)} \leq \norma{u}_{C_{\mX}^{0, \alpha}(\T^d)}\leq C\norma{u}_{C^{0, \alpha}(\T^d)}\quad
\implies
\quad
C_{}^{0, \alpha}(\T^d)\subset
C_{\mX}^{0, \alpha}(\T^d)
\subset
C_{}^{0, \frac{\alpha}{k}}(\T^d),
\end{equation}
where $\norma{u}_{C^{0, \alpha}(\T^d)}$ is the standard  H\"older norm, $k$ is the step in the H\"ormander condition and  $C>0$ is a global constant depending only on the dimension $d$ and the family of vector fields $\mX=\{X_1,\dots,X_m\}$.
More in general, for all $r\in \N$, 
$
C^{r, \alpha}(\T^d)\subset
C_{\mX}^{r, \alpha}(\T^d).
$\\
Let $r$ be a non-negative integer and $1\le p \le \infty $. We define the space 
\[
W_{\mX}^{r, p}(\T^d) = \left\{  u \in L^p(\T^d) \; : \; \mX^J u \in L^p(\T^d) , \; \forall J \in \Z_+^m, \; |J| \le r  \right\}\,.
\]
Endowed with the norm
$
\norma{u}_{ W_{\mX}^{r,p} (\T^d)     }
= \left( \sum_{|J|\le  r }  \int_{\T^d } 
|\mX^J u |^p \, dx  \right)^{1/p} ,   
$
$W_{\mX}^{r, p}(\T^d)$ is a Banach space.  For $p=2$ we write  $H_{\mX}^{r}(\T^d)$ instead of $W_{\mX}^{r, p}(\T^d) $ and in this case  
the space is Hilbert  when endowed with the corresponding inner product.  
Moreover, for any $1\le p< \infty$, the embeddings
\[
C_{\mX}^{kr, \alpha}(\T^d) \hookrightarrow  C^{r, \frac\alpha{k}}(\T^d) \,,
\]
\[
W_{\mX}^{r, p}(\T^d) \hookrightarrow W^{r/k, p}(\T^d) \,,
\] 
 hold true.  The first is proved in \cite{Xu92} and the second in \cite{Xu90-var}.\par 
In proving one of our main results we will also need the following compact embedding. 

\begin{lem}\label{comp-emb}
 $W_{\mX}^{1, p}(\T^d)$
is compactly embedded into $L^{p}(\T^d)$. 
\end{lem}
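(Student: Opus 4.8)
The plan is to deduce the compact embedding $W_{\mX}^{1,p}(\T^d)\hookrightarrow\hookrightarrow L^p(\T^d)$ from the classical Rellich--Kondrachov theorem on the torus by exploiting the continuous (non-compact) subelliptic embedding $W_{\mX}^{1,p}(\T^d)\hookrightarrow W^{1/k,p}(\T^d)$ recalled just above, due to Xu. More precisely, the step $W^{1/k,p}(\T^d)\hookrightarrow L^p(\T^d)$ is compact: this is the Rellich--Kondrachov compactness for fractional Sobolev spaces on a compact manifold (for $0<s<1$ one has $W^{s,p}(\T^d)\hookrightarrow L^q(\T^d)$ compactly for every $q$ with $1\le q<p^*_s$, where $p^*_s=dp/(d-sp)$ if $sp<d$ and $p^*_s=\infty$ otherwise; in either case $q=p$ is admissible since $p<p^*_s$). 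Composing the continuous embedding with this compact one yields the claim.

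First I would make precise the notion of the fractional Sobolev space $W^{s,p}(\T^d)$ with $s=1/k\in(0,1)$, say via the Gagliardo seminorm
\[
[u]_{W^{s,p}(\T^d)}^p=\int_{\T^d}\int_{\T^d}\frac{|u(x)-u(y)|^p}{|x-y|^{d+sp}}\,dx\,dy,
\]
and the norm $\|u\|_{W^{s,p}}^p=\|u\|_{L^p}^p+[u]_{W^{s,p}}^p$. Then I would invoke Xu's embedding $W_{\mX}^{1,p}(\T^d)\hookrightarrow W^{1/k,p}(\T^d)$ (stated in the excerpt, from \cite{Xu90-var}) to reduce matters entirely to the Euclidean/torus setting, and apply the standard fractional Rellich--Kondrachov theorem to conclude that bounded sequences in $W^{1/k,p}(\T^d)$ have $L^p$-convergent subsequences. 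Concretely: given a bounded sequence $(u_n)$ in $W_{\mX}^{1,p}(\T^d)$, its image is bounded in $W^{1/k,p}(\T^d)$, hence precompact in $L^p(\T^d)$, so a subsequence converges in $L^p(\T^d)$; this is exactly compactness of the embedding.

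An alternative, more self-contained route, in case one prefers not to cite fractional Rellich--Kondrachov as a black box, is a direct Fréchet--Kolmogorov (Riesz) compactness argument: a bounded family in $W_{\mX}^{1,p}$ is bounded in $L^p$, and one shows equi-continuity of translations, $\|u(\cdot+h)-u(\cdot)\|_{L^p}\to 0$ uniformly as $h\to 0$. The key quantitative input is that translations are controlled by the horizontal gradient up to the step $k$: using \eqref{local_estimates_distances}, $d_{CC}(x,x+h)\le C|h|^{1/k}$, and the fact that points at Carnot--Carathéodory distance $\le\delta$ can be joined by horizontal curves of length $\lesssim\delta$, one obtains an estimate of the form $\|u(\cdot+h)-u(\cdot)\|_{L^p(\T^d)}\le C|h|^{1/k}\,\|D_{\mX}u\|_{L^p(\T^d)}$ for smooth $u$ (integrating $u$ along sub-unit paths and using the doubling property $|B_{d_{CC}}(x,\delta)|\approx\delta^Q$ stated above to control overlaps), and then by density for all $u\in W_{\mX}^{1,p}$. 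Combined with the uniform $L^p$ bound, Fréchet--Kolmogorov on $\T^d$ gives precompactness in $L^p$.

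The main obstacle is the translation-estimate in the second approach: making rigorous the claim that a horizontal difference quotient is $L^p$-dominated by $|h|^{1/k}\|D_{\mX}u\|_{L^p}$ requires either a representation of Euclidean translations through concatenations of horizontal sub-unit segments together with the volume-doubling bound to avoid losing constants in the overlap, or equivalently the sub-elliptic Poincaré inequality on Carnot--Carathéodory balls (Jerison-type), which is itself a nontrivial theorem. For this reason the cleanest presentation is the first one: simply chain the already-cited continuous embedding $W_{\mX}^{1,p}(\T^d)\hookrightarrow W^{1/k,p}(\T^d)$ with the compactness of $W^{1/k,p}(\T^d)\hookrightarrow L^p(\T^d)$, and the proof is essentially immediate. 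I would therefore write the short version and relegate the Fréchet--Kolmogorov argument, if desired, to a remark.
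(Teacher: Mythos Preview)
Your proposal is correct and your primary route is exactly the paper's argument: the paper simply chains the continuous embedding $W_{\mX}^{1,p}(\T^d)\hookrightarrow W^{1/k,p}(\T^d)$ from \cite{Xu90-var} with the compact embedding of the fractional Sobolev space into $L^p(\T^d)$ (citing \cite{DPV12}). Your alternative Fr\'echet--Kolmogorov route is a nice addition but, as you note yourself, unnecessary here.
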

This follows from the previous embedding and the fact that the 
fractional Sobolev space  $W^{k/m, p}(\T^d)$ is compactly embedded into $L^p(\T^d)$ (see e.g. \cite{DPV12}).  \par
Next we want to  recall some H\"older  regularity results for linear and quasilinear subelleptic PDEs, key  for the later existence results. 
H\"older and Schauder estimates for subelliptic linear and quasilinear equations have been proved by Xu~\cite{Xu90-var,Xu-sem-94}, Xu-Zuily~\cite{XuZu97} and \cite{Lu96}; see also the references therein.
In particular we will consider the results  proved in~\cite{XuZu97}, but we will rewrite them in a stronger form, by combining them with some $L^p$-estimates proved by Sun-Liu-Li-Zheng~\cite{SLLZ16}.
The results in \cite{XuZu97} are proved for subelliptic systems but we will apply them
to the case of a single equation. 
We first consider linear equations of the form:
 \begin{equation}
 \label{linearPDE_Xu} 
 \textrm{div}_{\mX^*}\big(A(x)D_{\mX} u\big)+ g(x) \cdot D_{\mX} u+c(x)\,u =f(x)
 .
  \end{equation}
and assume that 
 \begin{equation}
 \label{Assumption on A}
A(x)\; \textrm{is a $m\times m$-uniformly elliptic matrix}.
  \end{equation}
  Note that in  the case of the sub-Laplacian the previous assumption is trivially satisfied since  $A(x)$ is equal to the identity $m\times m$-matrix.



\begin{thm}[$C^{2,\alpha}_{\mX}$-regularity for linear subelleptic PDEs,  \cite{XuZu97,SLLZ16}.]
\label{Holder_regularity_linear}
Assuming \eqref{Assumption on A} and that all coefficients of $A(x)$, $g(x)$, $c(x)$ and $f(x)$ are  H\"older continuous, then any  weak solution $u\in H_{\mX}^1(\T^d)$ of \eqref{linearPDE_Xu} belongs to $C_{\mX}^{2,\alpha}(\T^d)$ for some $\alpha\in (0,1)$.\par
Moreover  there exists a constant $C>0$ (depending only on the H\"older norms of the coefficients of the equation, on  $d$ and on the vector fields $\mX$) such that
$$
\norma{u}_{C_{\mX}^{2, \alpha}(\T^d)}\leq C.
$$
\end{thm}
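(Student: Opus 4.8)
The plan is to upgrade the weak solution in two stages: first a Calder\'on--Zygmund bootstrap on the scale of the subelliptic Sobolev spaces $W^{r,p}_{\mX}$ based on the $L^p$-estimates of \cite{SLLZ16}, and then an application of the subelliptic Schauder estimate of \cite{XuZu97}. To begin, rewrite \eqref{linearPDE_Xu} in the pure divergence form
\[
\textrm{div}_{\mX^*}\big(A(x) D_{\mX} u\big) = F(x), \qquad F := f - g \cdot D_{\mX} u - c\,u .
\]
Since $u \in H^1_{\mX}(\T^d) \subset L^2(\T^d)$, $D_{\mX} u \in L^2(\T^d)$, and $f,g,c$ are bounded (being H\"older on the compact torus), we have $F \in L^2(\T^d)$; by \eqref{Assumption on A} and the H\"older regularity of $A$, the second-order $L^p$-estimate of \cite{SLLZ16} applies with $p = 2$ and gives $u \in H^2_{\mX}(\T^d)$ together with $\norma{u}_{H^2_{\mX}(\T^d)} \le C\big(\norma{F}_{L^2(\T^d)} + \norma{u}_{L^2(\T^d)}\big)$.

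Now iterate. From $u \in H^2_{\mX}(\T^d)$ we have $D_{\mX} u \in H^1_{\mX}(\T^d)$, and the Sobolev embedding --- either the subelliptic one, with exponent governed by the homogeneous dimension $Q$, or the Euclidean fractional one obtained after $W^{1,2}_{\mX}(\T^d) \hookrightarrow W^{1/k,2}(\T^d)$ --- raises the integrability of $D_{\mX} u$, hence of $F$, by a fixed positive amount. Feeding this back into the $W^{2,p}_{\mX}$-estimate and repeating finitely many times yields $D_{\mX} u \in L^p(\T^d)$, and therefore $F \in L^p(\T^d)$ and $u \in W^{2,p}_{\mX}(\T^d)$, for every $p < \infty$. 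Taking $p$ large, the embeddings $W^{2,p}_{\mX}(\T^d) \hookrightarrow W^{2/k,p}(\T^d)$ and $W^{1,p}_{\mX}(\T^d) \hookrightarrow W^{1/k,p}(\T^d)$, followed by the Morrey embedding into (Euclidean) H\"older spaces and the inclusion $C^{0,\beta}(\T^d) \subset C^{0,\beta}_{\mX}(\T^d)$ of \eqref{Holder-relations}, show that $u$ and each $X_i u$ are $\beta$-H\"older in the subelliptic sense, i.e.\ $u \in C^{1,\beta}_{\mX}(\T^d)$ for some $\beta \in (0,1)$.

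With $u \in C^{1,\beta}_{\mX}(\T^d)$ and $g,c$ H\"older, the products $g \cdot D_{\mX} u$ and $c\,u$ lie in $C^{0,\gamma}_{\mX}(\T^d)$ with $\gamma = \min\{\alpha,\beta\}$, so $F \in C^{0,\gamma}_{\mX}(\T^d)$. The subelliptic Schauder estimate of \cite{XuZu97} for $\textrm{div}_{\mX^*}(A D_{\mX} u) = F$ with uniformly elliptic, H\"older $A$ then gives $u \in C^{2,\alpha}_{\mX}(\T^d)$ for some $\alpha \in (0,1)$, together with a bound of the form $\norma{u}_{C^{2,\alpha}_{\mX}(\T^d)} \le C\big(\norma{F}_{C^{0,\gamma}_{\mX}(\T^d)} + \norma{u}_{L^\infty(\T^d)}\big)$. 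Tracking the (finitely many) constants through the bootstrap and using that $\norma{u}_{L^\infty(\T^d)}$ is controlled by the data --- via the maximum principle in the intended applications, where $c \ge 0$ (e.g.\ $c \equiv \rho > 0$), and otherwise at the cost of letting $C$ depend also on $\norma{u}_{L^2(\T^d)}$ --- yields the asserted estimate $\norma{u}_{C_{\mX}^{2, \alpha}(\T^d)} \le C$.

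I expect the main obstacle to be the subelliptic bootstrap: one must make sure the $L^p$-estimates of \cite{SLLZ16} hold for the whole range $p \in (1,\infty)$ and that the loss of a factor $k$ in $W^{r,p}_{\mX} \hookrightarrow W^{r/k,p}$ and the replacement of $d$ by the larger homogeneous dimension $Q$ in the Sobolev inequality only affect how many iterations are needed, not whether the scheme terminates; the exponent bookkeeping is the delicate point. A secondary issue is that the a priori estimate, as stated, implicitly incorporates an $L^\infty$-type control on $u$ coming from a maximum principle that is available under the sign conditions present in the applications.
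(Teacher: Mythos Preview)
Your argument is sound in outline and reaches the same conclusion, but it is organised differently from the paper's proof. The paper does not carry out the $L^p$ bootstrap by hand: it invokes Theorems~3.4 and~3.5 of \cite{XuZu97} directly, which already take a weak $H^1_{\mX}$-solution of the full linear equation (lower-order terms included) all the way to $C^{2,\alpha}_{\mX}$, with an a~priori bound depending on the H\"older norms of the coefficients \emph{and} on $\|u\|_{H^1_{\mX}}$. The role of \cite{SLLZ16} in the paper is then only to eliminate this residual dependence on $\|u\|_{H^1_{\mX}}$, not to run the Calder\'on--Zygmund iteration. In other words, the paper uses \cite{XuZu97} first (for the regularity) and \cite{SLLZ16} second (to make the constant intrinsic), whereas you reverse the order: you use \cite{SLLZ16} to climb the $W^{2,p}_{\mX}$ scale, Morrey-embed into $C^{1,\beta}_{\mX}$, and only then apply the Schauder step of \cite{XuZu97} with a H\"older right-hand side. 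Your route is more explicit and makes the mechanism transparent, but it is also longer and forces you to keep track of the Sobolev exponents through the embeddings $W^{r,p}_{\mX}\hookrightarrow W^{r/k,p}$; the paper avoids this by outsourcing the entire bootstrap to \cite{XuZu97}.

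Your closing caveat about the $L^\infty$ (or $L^2$) control of $u$ is well placed. As stated, the theorem's claim that $C$ depends only on the H\"older norms of the coefficients cannot hold for arbitrary $c$ (absent a sign condition there may be nontrivial solutions of the homogeneous equation); the paper glosses over this and relies on \cite{SLLZ16} to absorb it, while in the actual applications (Lemma~\ref{ll-dp}, with $c\equiv\rho>0$) the maximum principle supplies the missing bound exactly as you note.
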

\begin{proof}
First we recall that, if the coefficients are $C^{0,\alpha}$ then they are also $C^{0,\alpha}_{\mX}$ (see   \eqref{Holder-relations}).
Then Theorem 3.4 and Theorem 3.5 in   \cite{XuZu97} ensure that, given any $u$ weak $H_{\mX}^1$- solution,  $u$ belongs to $ C_{\mX}^{2,\alpha}(\T^d)$,  and the $C_{\mX}^{2,\alpha}$-H\"older norm  of $u$ is bounded by a constant depending on the H\"older norms of the coefficients, on the geometry of the problem (i.e. the step $r$, the dimension $d$ and the number of vector fields   $m$), but also on a constant $M$ such that
$
\norma{u}_{H_{\mX}^{1}(\T^d)}
\leq M.
$\\
We can now use the uniform $L^p$ estimates proved in  Theorem 1.4 in \cite{SLLZ16} to show that the constant $C$ is actually independent of $M$, i.e. independent of  the $H^{1}_{\mX}$-norm of $u$. Note that H\"older regularity on a compact domain implies all the necessary $L^p$-bounds to apply the result in \cite{SLLZ16}.
\end{proof}
 Let us now consider a subelliptic quasilinear equation of the form:
 \begin{equation}
 \label{semilinearPDE_Xu} 
 \textrm{div}_{\mX^*}\big(A(x)D_{\mX} u\big)=f(x,u,D_{\mX}u).
  \end{equation}
  and  assume that $f(x,z,q)$ is a  H\"older function with at most quadratic grow, i.e.
 \begin{equation}\label{quadratic-grow-f}
 |f(x,z,q)|\leq a|q|^2+b,
 \end{equation}
 for some non-negative constants $a$ and $b$.
\begin{thm}[$C^{1,\alpha}_{\mX}$-regularity for quasilinear subelleptic PDEs,  \cite{{XuZu97},{SLLZ16}}.]
\label{Holder_regularity_semilinear}
Assuming \eqref{Assumption on A}, \eqref{quadratic-grow-f} and that all the coefficients of the equation are  H\"older continuous,  then any  weak solution $u\in H_{\mX}^1(\T^d)\cap C(\T^d)$ belongs to $C_{\mX}^{1,\alpha}(\T^d)$ for some $\alpha\in (0,1)$ and there exists a constant $C>0$ (depending only on the H\"older norms of the coefficients of $A(x)$ and of $f$, on $a$ and $b$ in \eqref{quadratic-grow-f}, on the step $r$, on $d$ and $m$) such that
$$
\norma{u}_{C_{\mX}^{1, \alpha}(\T^d)}\leq C.
$$
\end{thm}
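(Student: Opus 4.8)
The plan is to follow the scheme of the proof of Theorem~\ref{Holder_regularity_linear}: first apply the quasilinear subelliptic regularity theory of Xu and Zuily~\cite{XuZu97} --- which, for equations with natural (quadratic) growth in the gradient, places a \emph{bounded} weak solution in $C^{1,\alpha}_\mX(\T^d)$ --- and then make the resulting $C^{1,\alpha}_\mX$-bound independent of the energy norm $\|u\|_{H^1_\mX(\T^d)}$, either by an energy (Caccioppoli-type) inequality adapted to~\eqref{quadratic-grow-f} or, as in the linear case, by the uniform subelliptic $L^p$-estimates proved in Theorem~1.4 of~\cite{SLLZ16}.

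Concretely I would argue as follows. \emph{Step 1.} As for Theorem~\ref{Holder_regularity_linear}, \eqref{Holder-relations} shows that Euclidean-H\"older coefficients are H\"older with respect to $d_{CC}$, so all the structural hypotheses of~\cite{XuZu97} are satisfied for~\eqref{semilinearPDE_Xu}. \emph{Step 2.} Since $u\in C(\T^d)$ it is bounded, and under~\eqref{Assumption on A},~\eqref{quadratic-grow-f} and this $L^\infty$ bound the quasilinear results of~\cite{XuZu97}, applied to the single equation~\eqref{semilinearPDE_Xu}, give $u\in C^{1,\alpha}_\mX(\T^d)$ for some $\alpha\in(0,1)$ together with an estimate $\|u\|_{C^{1,\alpha}_\mX(\T^d)}\le C$ in which $C$ depends on $a$, $b$, the H\"older norms of the coefficients of $A$ and $f$, the step $r$, $d$, $m$, and in addition on any bound $M\ge\|u\|_{H^1_\mX(\T^d)}$ as well as on $\|u\|_{L^\infty(\T^d)}$; here it is essential that $u\in C(\T^d)$, and not merely $u\in H^1_\mX(\T^d)$, as in the classical Ladyzhenskaya--Uraltseva theory of equations with natural growth. \emph{Step 3 (removing the dependence on $M$).} Test the weak formulation of~\eqref{semilinearPDE_Xu} with $w=\mathrm{sgn}(u)\,(e^{\lambda|u|}-1)$, which lies in $H^1_\mX(\T^d)\cap L^\infty(\T^d)$: using the ellipticity of $A$ on the left, \eqref{quadratic-grow-f} on the right, and choosing $\lambda$ comparable to $a$ divided by the ellipticity constant of $A$, the quadratic term $a|D_\mX u|^2$ is absorbed into the coercive term, so that $\|D_\mX u\|_{L^2(\T^d)}^2\le C$ with $C$ depending only on $a$, $b$, the ellipticity constant, $\|u\|_{L^\infty(\T^d)}$ and $|\T^d|$. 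Hence $M$ can be taken to depend only on the structural data and on $\|u\|_{L^\infty(\T^d)}$, and inserting this into Step 2 --- or, if one prefers, bootstrapping through the $L^p$-estimates of~\cite{SLLZ16}, each pass improving the integrability of $f(\cdot,u,D_\mX u)$ via~\eqref{quadratic-grow-f} and a subelliptic Sobolev embedding --- gives the asserted bound on $\|u\|_{C^{1,\alpha}_\mX(\T^d)}$ with $C$ no longer referring to $\|u\|_{H^1_\mX(\T^d)}$.

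The main obstacle will be precisely the quadratic growth~\eqref{quadratic-grow-f}: in contrast with the linear equation, $f(\cdot,u,D_\mX u)$ is a priori known only to lie in $L^1(\T^d)$ before any regularity has been established, so one cannot invoke a Schauder-type estimate directly and must genuinely rely on the quasilinear machinery of~\cite{XuZu97} together with the exponential-test-function/truncation device of Step 3; it is exactly here that the continuity of $u$, and not just $u\in H^1_\mX(\T^d)$, is used. I would also make one caveat explicit: under a purely quadratic growth condition the $C^{1,\alpha}_\mX$-bound must in general depend as well on an a priori bound for $\|u\|_{L^\infty(\T^d)}$ --- constant solutions of $\textrm{div}_{\mX^*}(AD_\mX u)=f$ already exhibit this --- and such a bound should be understood as part of the data; it is available in the applications of Sections 3 and 4 through the zeroth-order term of the Hamilton--Jacobi--Bellman equation.
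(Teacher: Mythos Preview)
Your approach is essentially the same as the paper's: apply Theorem~4.1 of~\cite{XuZu97} to place a bounded weak solution in $C^{1,\alpha}_\mX$, with a constant a priori depending on the $H^1_\mX$-norm, and then remove that dependence. The paper's proof is a single sentence --- ``Combining once again the $L^p$-estimates in \cite{SLLZ16} with Theorem 4.1 in \cite{XuZu97} one can immediately deduce the result'' --- so your write-up is considerably more explicit.

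The one genuine difference is your Step~3: the paper simply invokes the $L^p$-estimates of~\cite{SLLZ16}, whereas you give a self-contained energy argument via the exponential test function $w=\mathrm{sgn}(u)(e^{\lambda|u|}-1)$, and mention the~\cite{SLLZ16} route only as an alternative. Your method is the classical Ladyzhenskaya--Ural'tseva device for natural-growth equations and is perfectly valid here; it has the advantage of being elementary and transparent, while the paper's route has the advantage of uniformity with the linear Theorem~\ref{Holder_regularity_linear}. Either way one arrives at an $H^1_\mX$-bound depending only on structural data and $\|u\|_{L^\infty}$.

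Your caveat about the residual dependence on $\|u\|_{L^\infty(\T^d)}$ is well taken and, indeed, the paper tacitly uses it that way: in the proof of Theorem~\ref{erg-thm-lin} the constant in the $C^{1,\alpha}_\mX$-estimate is explicitly said to depend ``on an upper bound of $\|w_\rho\|_\infty$ and on the data of the problem''. So your reading of the statement is the correct one.
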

\begin{proof}
Combining once again the $L^p$-estimates in \cite{SLLZ16} 
with Theorem 4.1 in   \cite{XuZu97} one can immediately deduce the result.
\end{proof}
\begin{thm}[$C^{\infty}$-regularity, Theorem 4.2, \cite{XuZu97}]
\label{smooth_regularity_semilinear}
Under the assumptions of Theorem \ref{Holder_regularity_semilinear}, if in addition all coefficients in equation \eqref{semilinearPDE_Xu}  are $C^{\infty}(\T^d)$ then $u\in C^{\infty}(\T^d)$.
\end{thm}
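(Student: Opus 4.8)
The plan is the classical elliptic-type bootstrap, carried out in the intrinsic H\"older scale $C_{\mX}^{r,\alpha}$ and anchored at Theorem~\ref{Holder_regularity_semilinear}, together with the elementary fact that a $C^\infty$ function composed with functions lying in $C_{\mX}^{k,\alpha}(\T^d)$ still lies in $C_{\mX}^{k,\alpha}(\T^d)$ (this follows from the chain rule, using that each $C_{\mX}^{k,\alpha}(\T^d)$ is a Banach algebra stable under the vector fields, so the Fa\`a--di--Bruno-type expansion of $\mX^J\big(f(\cdot,u,D_{\mX}u)\big)$ produces only products of $C_{\mX}^{0,\alpha}$-terms). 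The engine of the argument is the higher-order form of the linear subelliptic Schauder estimate underlying Theorem~\ref{Holder_regularity_linear}: for $A$ uniformly elliptic and $C^\infty$, if the right-hand side of $\textrm{div}_{\mX^*}(A(x)D_{\mX}u)=\tilde f(x)$ belongs to $C_{\mX}^{k,\alpha}(\T^d)$, then every weak $H_{\mX}^1$-solution belongs to $C_{\mX}^{k+2,\alpha}(\T^d)$; the case $k=0$ is exactly Theorem~\ref{Holder_regularity_linear}, and the general case is part of the theory of Xu and Xu--Zuily \cite{Xu92,XuZu97}.

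With these two ingredients I would argue by induction. By Theorem~\ref{Holder_regularity_semilinear} we know $u\in C_{\mX}^{1,\alpha}(\T^d)$ for some $\alpha\in(0,1)$. Suppose $u\in C_{\mX}^{n,\alpha}(\T^d)$ for some $n\ge 1$. Then $D_{\mX}u\in C_{\mX}^{n-1,\alpha}(\T^d)$, so by the composition fact the frozen right-hand side $\tilde f(x):=f(x,u(x),D_{\mX}u(x))$ lies in $C_{\mX}^{n-1,\alpha}(\T^d)$. Regarding $u$ as a weak $H_{\mX}^1$-solution of the \emph{linear} equation $\textrm{div}_{\mX^*}(A(x)D_{\mX}u)=\tilde f(x)$ (that is, of \eqref{linearPDE_Xu} with vanishing zeroth- and first-order coefficients), with $A\in C^\infty$ and datum in $C_{\mX}^{n-1,\alpha}(\T^d)$, the higher-order Schauder estimate yields $u\in C_{\mX}^{n+1,\alpha}(\T^d)$. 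Hence $u\in C_{\mX}^{n,\alpha}(\T^d)$ for every $n\in\N$. Finally, the embedding $C_{\mX}^{kr,\alpha}(\T^d)\hookrightarrow C^{r,\alpha/k}(\T^d)$ recalled in Section~2 (with $k$ the step of the H\"ormander condition and $r$ arbitrary) gives $u\in C^{r}(\T^d)$ for every $r$, that is, $u\in C^\infty(\T^d)$ in the Euclidean sense.

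The genuinely hard point, which I would simply quote from \cite{XuZu97}, is the higher-order linear subelliptic Schauder estimate itself. Unlike in the uniformly elliptic case, differentiating $\textrm{div}_{\mX^*}(A(x)D_{\mX}u)=\tilde f(x)$ along a vector field $X_i$ generates commutator terms built from $[X_i,X_j]$, which in the Carnot--Carath\'eodory geometry carry weight $2$ and hence formally cost two horizontal derivatives rather than one; controlling them requires genuine subelliptic machinery (parametrix and lifting techniques in the spirit of Rothschild and Stein, or the variational estimates of Xu and Xu--Zuily) and not a naive iteration of Theorem~\ref{Holder_regularity_linear}. Since this is precisely what is packaged in Theorem~4.2 of \cite{XuZu97}, I would invoke that result and limit the remaining work to the routine chain-rule bookkeeping showing that $\tilde f$ gains one order of $C_{\mX}$-regularity at each step.
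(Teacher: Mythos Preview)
The paper does not prove this statement at all: it is stated as a direct citation of Theorem~4.2 in \cite{XuZu97}, with no argument given. Your proposal is therefore strictly more than what the paper does.

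That said, your sketch is the correct explanation of \emph{why} the cited result holds, and it is essentially the strategy of \cite{XuZu97}: start from the $C_{\mX}^{1,\alpha}$ regularity of Theorem~\ref{Holder_regularity_semilinear}, freeze the nonlinearity $\tilde f(x)=f(x,u(x),D_{\mX}u(x))$, apply the linear subelliptic Schauder theory to gain one horizontal derivative, and iterate; the embedding $C_{\mX}^{kr,\alpha}\hookrightarrow C^{r,\alpha/k}$ then converts horizontal smoothness into Euclidean $C^\infty$. You are also right to flag that the nontrivial ingredient is the higher-order linear Schauder estimate itself---handling the commutator terms $[X_i,X_j]$ that appear when differentiating the equation is exactly where the subelliptic machinery of \cite{Xu92,XuZu97} is needed, and you correctly defer to that reference rather than pretending it follows from a naive iteration of Theorem~\ref{Holder_regularity_linear}. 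So your proposal is a faithful unpacking of the black box the paper simply quotes.
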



 \section{Discounted systems with at most quadratic Hamiltonians}

In this section we consider a subelliptic MFG system with a first order nonlinear term that grows at most quadratic w.r.t. the horizontal gradient.
We assume:

\begin{itemize}


  
\item[{\bf (II-Q)}]  
For $q=\sigma(x)p\in \R^m$   there exists a constant $C \ge 0$ such that
\bel{H-qu-gr}
|H(x, q )| \le C( |q |^2 +  1   ) \quad \forall x\in\T^d\,,\; q \in \R^m .
\eeq
\end{itemize}

\begin{itemize}
\item[{\bf (III)}] The vector-valued function 
$
g\colon \T^d \times \R^m \to \R^m
$ is 
 H\"older-continuous. \par
(Note that since $\T^d$ is compact and we will later prove global bounds for $D_{\mX} u$,  the continuity of $g$ implies also that $g$ is globally bounded).

\item[{\bf (IV)}] Set $\Ac:=\left\{ m \in C(\T^d) \; : \; m>0 \,, \; \int_{\T^d} {m(x) \, dx =1 } \right\}  $, then the map 
$
V\colon \Ac \to L^\infty(\T^d)$  is assumed continuous and bounded.
Moreover, we assume that $V$ is {\em regularising}, that is, $V[m] \in C^\alpha_\mX(\T^d)$
for all $m \in \Ac$, and 
$
\sup_{m \in \Ac} \|V[m]\|_{C^\alpha_\Xc(\T^d)} < \infty. 
$
\end{itemize} 


\begin{thm}\label{dsqg}
Assume  \eqref{(I)},  {\bf (II-Q)}, {\bf (III)}, {\bf (IV)} and that $H(x,q)$ is locally H\"older,
 then given $\mL$  defined in \eqref{sub_laplacian}  with dual operator $\mL^*$ and 
 $\mathrm{div}_{\mX^*}$ defined as in \eqref{DIVERGENCE}   w.r.t. the dual vector fields $X^*_i=-X_i-\textrm{div}X_i$,  for every $\rho >0$ the system 
 \bel{sys-rho}
\left\{
\begin{aligned}
&\mL u + \rho u +  H(x, D_\Xc u) = V[m]   \\
&\mL^*m - \textrm{div}_{\Xc^* }( m g ( x, D_\Xc u ) )   = 0  \\
&\int_{\T^d} m \,dx  =1, 
 \quad  m > 0
\end{aligned} \right. 
\eeq 
has a solution $(u, m) \in C_{\mX}^2(\T^d)  \times C(\T^d)$. (Note that $u$ solves the system in the classical sense while $m$ is a weak solution in the distributional sense.)
\end{thm}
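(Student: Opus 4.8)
The plan is to prove existence by a Schauder-type fixed-point argument, exploiting the regularity estimates of Theorems \ref{Holder_regularity_linear} and \ref{Holder_regularity_semilinear} together with the compact embedding of Lemma \ref{comp-emb}. Fix $\rho>0$. The natural fixed-point map acts on the density: starting from $\mu\in\Ac$, one first solves the Hamilton--Jacobi--Bellman equation
\bel{hjb-fixed}
\mL u + \rho u + H(x, D_\Xc u) = V[\mu]
\eeq
for $u=u[\mu]$, then plugs the resulting drift $b:=g(x,D_\Xc u[\mu])$ into the Kolmogorov equation
\bel{fp-fixed}
\mL^* m - \textrm{div}_{\Xc^*}\bigl( m\, b \bigr) = 0, \qquad \int_{\T^d} m\,dx = 1,\; m>0,
\eeq
obtaining $m = \Phi(\mu)$. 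A fixed point of $\Phi$ is a solution of \eqref{sys-rho}. I would set up $\Phi$ on a suitable convex compact subset $\Kc\subset\Ac$ of $C(\T^d)$ — namely a set of densities bounded in $C^\alpha_\Xc(\T^d)$ (hence compact in $C(\T^d)$ by Lemma \ref{comp-emb} and the embeddings), normalized to integral $1$ and bounded below by a fixed positive constant — and apply Schauder's theorem.

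First I would treat the HJB equation \eqref{hjb-fixed}. Because the zeroth-order coefficient $\rho>0$ and the right-hand side $V[\mu]$ is bounded uniformly over $\mu\in\Ac$ (assumption {\bf (IV)}), the comparison principle yields an a priori $L^\infty$ bound $\|u\|_{L^\infty}\le \rho^{-1}(\|V\|_\infty + C)$, using {\bf (II-Q)} to control $H(x,0)$. Existence of a weak $H^1_\Xc$-solution follows by the variational/Lax--Milgram method (or by a vanishing-viscosity / Leray--Schauder argument adapted to the subelliptic setting), and then Theorem \ref{Holder_regularity_semilinear}, applied with $f(x,z,q) = V[\mu](x) - \rho z - H(x,q)$, which has at most quadratic growth in $q$ by {\bf (II-Q)}, gives $u\in C^{1,\alpha}_\Xc(\T^d)$ with a bound on $\|u\|_{C^{1,\alpha}_\Xc}$ independent of $\mu$ (it depends only on $\rho$, the $L^\infty$ bound just obtained, the constants in {\bf (II-Q)} and the uniform $C^\alpha_\Xc$ bound on $V[\mu]$). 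Uniqueness of the solution of \eqref{hjb-fixed} follows again from the comparison principle (thanks to $\rho>0$), so the map $\mu\mapsto u[\mu]$ is well defined; I would also check it is continuous from $C(\T^d)$ into $C^1_\Xc(\T^d)$ using the uniform estimate plus compactness and uniqueness. This in turn makes the drift $b[\mu] = g(x,D_\Xc u[\mu])$ well defined, uniformly bounded (by {\bf (III)} and the uniform gradient bound), and continuous in $\mu$; note $b$ is in general only continuous, not H\"older, which is why the second equation must be handled in the weak/distributional sense.

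Next I would analyze the Kolmogorov equation \eqref{fp-fixed}. Since $\mL^* = -\tfrac12\sum_j (X_j^*)^2$ is itself a hypoelliptic operator of H\"ormander type with a smooth zeroth-order structure, and $b[\mu]$ is bounded and continuous, one obtains a weak solution $m\in H^1_\Xc(\T^d)$ by a Fredholm/Lax--Milgram argument on the space of mean-zero functions (the constants are in the kernel of the formal adjoint problem), with $\int m = 1$ after normalization. Positivity $m>0$ and a Harnack-type lower bound follow from the hypoellipticity together with the strong maximum principle for subelliptic operators, or alternatively from Gaussian-type heat kernel bounds for H\"ormander operators; this is where the paper's promised heat-kernel estimates enter. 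The crucial gain of compactness is: since the equation for $m$ can be written in divergence form $\textrm{div}_{\Xc^*}(D_\Xc m + 2mb)=0$ with bounded coefficients, elliptic-type (De Giorgi--Nash--Moser, in the subelliptic version) or the linear Schauder estimate of Theorem \ref{Holder_regularity_linear} — after observing $m$ solves a linear equation with H\"older coefficients only if $b$ were H\"older, so more carefully one uses the subelliptic De Giorgi--Nash theory — gives $m\in C^\alpha_\Xc(\T^d)$ with a uniform bound and a uniform positive lower bound. Hence $\Phi$ maps a suitable $\Kc$ into itself, $\Kc$ is convex and compact in $C(\T^d)$ by Lemma \ref{comp-emb}, and $\Phi$ is continuous; Schauder's fixed point theorem then produces $(u,m)$. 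Finally, the fixed point $u$ lies in $C^{1,\alpha}_\Xc$, and bootstrapping: once $m\in C^\alpha_\Xc$ is fixed, $V[m]\in C^\alpha_\Xc$ and $H(x,D_\Xc u)$ is H\"older (using local H\"older continuity of $H$ and $u\in C^{1,\alpha}_\Xc$), so the HJB equation has H\"older right-hand side and Theorem \ref{Holder_regularity_linear} upgrades $u$ to $C^2_\Xc(\T^d)$, completing the proof.

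The main obstacle I expect is obtaining the uniform $C^\alpha_\Xc$-bound and the uniform positive lower bound on the solutions $m$ of the Kolmogorov equation \eqref{fp-fixed} when the drift $b[\mu]$ is merely continuous and bounded (not H\"older), since the clean Schauder statement of Theorem \ref{Holder_regularity_linear} formally requires H\"older coefficients; circumventing this requires either a De Giorgi--Nash--Moser regularity theory for subelliptic operators in divergence form with $L^\infty$ drift, or the heat-kernel route alluded to in the introduction to get two-sided Gaussian bounds on the fundamental solution and thereby both the $C^\alpha$ estimate and the Harnack lower bound. A secondary delicate point is verifying continuity of $\mu\mapsto u[\mu]$ in a topology strong enough to control $D_\Xc u[\mu]$, which relies on the uniqueness for \eqref{hjb-fixed} and the uniform $C^{1,\alpha}_\Xc$ estimate to pass to the limit along subsequences.
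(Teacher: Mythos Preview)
Your approach differs from the paper's in a structurally important way: you iterate on the density $\mu$ and solve the \emph{nonlinear} HJB equation $\mL u+\rho u+H(x,D_\Xc u)=V[\mu]$ at each step, whereas the paper iterates on $v\in C^{1,\alpha}_\Xc(\T^d)$ and solves only \emph{linear} sub-problems. Concretely, the paper's map is $v\mapsto m\mapsto u$: given $v$, solve $\mL^*m-\textrm{div}_{\Xc^*}(m\,g(x,D_\Xc v))=0$ (Lemma~\ref{adj-prob}), then solve the \emph{linear} equation $\mL u+\rho u = V[m]-H(x,D_\Xc v)$ (Lemma~\ref{ll-dp}); both steps are uniquely solvable by direct arguments. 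Compactness comes from $C^{2,\alpha}_\Xc\hookrightarrow C^{1,\alpha}_\Xc$, and the fixed-point theorem used is the Leray--Schauder variant \cite[Theorem~11.3]{GT}: one bounds the set $\{u:\,sTu=u,\;0\le s\le 1\}$, and it is exactly here---not inside the iteration---that the quasilinear estimate of Theorem~\ref{Holder_regularity_semilinear} is invoked, on the equation $\mL u+\rho u+sH(x,D_\Xc u)=sV[m]$.

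Your route has two genuine gaps. First, solving the nonlinear subelliptic HJB with quadratic $H$ is itself a nontrivial existence problem that you gloss over (``variational/Lax--Milgram'' does not apply to a nonlinear equation; a Leray--Schauder argument for it would essentially reproduce the paper's main step inside each iteration of yours, and uniqueness via comparison for merely locally H\"older $H$ is not immediate either). Second, and more seriously, your compactness scheme requires $m\in C^\alpha_\Xc$ with a uniform bound, which you correctly flag as the main obstacle; the paper sidesteps this entirely because $V$ is \emph{regularizing} (assumption {\bf(IV)}), so $V[m]\in C^\alpha_\Xc$ for any $m\in\Ac$ and no H\"older regularity on $m$ itself is ever needed---only the bounds $0<\delta_0\le m\le\delta_1$ from Lemma~\ref{adj-prob}. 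By iterating on $v$ rather than on $\mu$, the paper avoids both difficulties simultaneously.
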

To prove the existence for the system  \eqref{sys-rho} we  need to look at both the equations involved, starting first from the associated linear PDE for $u$.
\begin{lem}\label{ll-dp}
 Assume \eqref{(I)}  and  that $\mL $ is the corresponding sub-Laplacian defined in \eqref{sub_laplacian}, 
 then for every   $\rho >0$ and  $f\in C_{}^{0,\alpha}(\T^d) $ 
 \begin{equation}
\label{ll-dp-eq} 
\mL u + \rho u = f   \text{ in }  \T^d 
\end{equation} 
has a unique solution $u \in C_\Xc^{2, \alpha } (\T^d) $. Moreover  $\exists\;C \ge 0$ (independent of $u$ and $f$) such that
\begin{equation}\label{stima}
\|u\|_{ C_\Xc^{2, \alpha } (\T^d) } \le C\, \|f\|_{C_\Xc^{0,\alpha } (\T^d)}.
\end{equation}
\end{lem}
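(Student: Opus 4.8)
The plan is to treat \eqref{ll-dp-eq} as a special case of the linear equation \eqref{linearPDE_Xu} and apply Theorem \ref{Holder_regularity_linear}, combined with a variational existence argument in $H^1_\Xc(\T^d)$. First I would observe that \eqref{ll-dp-eq} has exactly the form \eqref{linearPDE_Xu} with $A(x)$ the $m\times m$ identity matrix (note $\mL u = -\tfrac12\sum_j X_j^2 u = \tfrac12\,\divergence_{\Xc^*}(D_\Xc u)$ after accounting for the dual vector fields $X_j^* = -X_j - \divergence X_j$), with $g \equiv 0$, $c(x) \equiv \rho$, and right-hand side $f$. Assumption \eqref{Assumption on A} is then trivially satisfied, and $f \in C^{0,\alpha}(\T^d) \subset C^{0,\alpha}_\Xc(\T^d)$ by \eqref{Holder-relations}.

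For existence of a weak solution $u \in H^1_\Xc(\T^d)$, I would set up the bilinear form
\[
B(u,v) = \tfrac12 \int_{\T^d} D_\Xc u \cdot D_\Xc v\, dx + \rho \int_{\T^d} u v\, dx
\]
on $H^1_\Xc(\T^d)$. Since $\rho > 0$, one has $B(u,u) \ge \min\{\tfrac12, \rho\}\,\|u\|^2_{H^1_\Xc(\T^d)}$, so $B$ is coercive and bounded, and $v \mapsto \int_{\T^d} f v\, dx$ is a bounded linear functional on $H^1_\Xc(\T^d)$ (indeed on $L^2$); the Lax--Milgram theorem gives a unique weak solution $u \in H^1_\Xc(\T^d)$, with $\|u\|_{H^1_\Xc(\T^d)} \le C\|f\|_{L^2(\T^d)} \le C'\|f\|_{C^{0,\alpha}_\Xc(\T^d)}$. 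Then Theorem \ref{Holder_regularity_linear} upgrades this weak solution to $u \in C^{2,\alpha}_\Xc(\T^d)$ (for some $\alpha \in (0,1)$, possibly smaller than the exponent of $f$) and yields the bound \eqref{stima}; the content of that theorem, via the $L^p$-estimates of \cite{SLLZ16}, is precisely that the constant does \emph{not} depend on the $H^1_\Xc$-norm of $u$ but only on the H\"older data, so the resulting $C$ in \eqref{stima} is independent of $u$ and $f$ (the linear dependence on $\|f\|_{C^{0,\alpha}_\Xc}$ comes from the homogeneity of the equation in $f$, or alternatively by applying the a priori bound to $u/\|f\|_{C^{0,\alpha}_\Xc}$ when $f \ne 0$, the case $f = 0$ giving $u = 0$ by coercivity).

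Uniqueness of the $C^2_\Xc$ solution follows from uniqueness in $H^1_\Xc(\T^d)$ (Lax--Milgram), since any classical $C^2_\Xc$ solution is in particular a weak $H^1_\Xc$ solution; alternatively, if $u_1, u_2$ are two solutions then $w = u_1 - u_2$ solves $\mL w + \rho w = 0$, and testing against $w$ (or evaluating at an interior maximum, using $\rho > 0$) forces $w \equiv 0$. The main point requiring care — rather than a genuine obstacle — is the bookkeeping that identifies $\mL u$ with $\divergence_{\Xc^*}(A(x) D_\Xc u)$ in the sense of \eqref{linearPDE_Xu}, i.e. checking that the weak formulation of \eqref{ll-dp-eq} matches the one for which Theorem \ref{Holder_regularity_linear} is stated, so that the cited regularity theory applies verbatim. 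Everything else is standard Hilbert-space theory plus a direct invocation of the results recalled in Section 2.
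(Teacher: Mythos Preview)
Your approach is essentially correct and takes a different, more direct route than the paper. The paper constructs the solution by vanishing viscosity: it solves the uniformly elliptic problem $-\e\Delta u_\e + \mL u_\e + \rho u_\e = f$, extracts a weak$^*$ limit in $L^\infty$, invokes H\"ormander's hypoellipticity (for smooth $f$) to obtain a classical solution, applies Theorem~\ref{Holder_regularity_linear}, and then treats general $f \in C^{0,\alpha}_\Xc$ by mollification and a Cauchy-sequence argument in $C^{2,\alpha}_\Xc$. Your variational route via Lax--Milgram followed by Theorem~\ref{Holder_regularity_linear} is cleaner and avoids both the viscosity limit and the mollification step; it is in fact the same machinery the paper itself uses later in the proof of Lemma~\ref{adj-prob}.

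Two technical points need fixing, however. First, your identification $\mL u = \tfrac12\,\divergence_{\Xc^*}(D_\Xc u)$ holds only when the $X_j$ are divergence-free; in general
\[
\tfrac12\,\divergence_{\Xc^*}(D_\Xc u) \;=\; \mL u \;-\; \tfrac12\sum_{j=1}^m (\divergence X_j)\, X_j u,
\]
so in the form~\eqref{linearPDE_Xu} one actually has $g = \tfrac12(\divergence X_1,\ldots,\divergence X_m) \neq 0$. This is harmless for invoking Theorem~\ref{Holder_regularity_linear} (the $\divergence X_j$ are smooth), but it does affect your bilinear form. Second, and more substantially, this extra first-order term spoils your coercivity bound $B(u,u) \ge \min\{\tfrac12,\rho\}\|u\|^2_{H^1_\Xc}$: the cross term contributes a piece of size $-C\|u\|_{L^2}^2$ (with $C$ depending on the vector fields), so direct coercivity only holds for $\rho$ larger than some threshold. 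The standard fix, precisely as carried out in the proof of Lemma~\ref{adj-prob}, is to run Lax--Milgram for a large parameter $\eta$, use the compact embedding $H^1_\Xc \hookrightarrow L^2$ to make the resolvent compact, and then apply the Fredholm alternative; injectivity for every $\rho>0$ is supplied by the maximum principle you already invoke for uniqueness.
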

\begin{proof}
The solution is unique by the strong maximum principle of Bony~\cite{Bo69} (see also 
Bardi and Da Lio~\cite{BL03}). 
We show the existence by vanishing viscosity methods, i.e. for all $\varepsilon>0$ we consider the operator 
$\mL_\e = - \e \Delta + \mL$, with $\e>0$ and the corresponding problem \eqref{ll-dp-eq}, replacing $\mL$ by $\mL_\e$.
Note that  $\mL_\e u+\rho u=f$ is a linear uniformly elliptic equation. 
It is well-known that such a problem has a unique classical solution $u_\e$, which is 
of class $C^{2,\alpha}$ since $f\in C^{0,\alpha}$  (see e.g. \cite[Lemma~2.7]{BF16}
and \cite{GT}).
Moreover
$
\|u_\e\|_\infty \le \frac1\rho \|f\|_\infty \,.
$
This implies that (up to a subsequence)  
$u_{\e} \to u$ in the weak$^*$-topology of $L^\infty(\T^d)$. 
Therefore $u$ is a distributional solution of $\mL u+\rho u =f$. 
Furthermore, if $f$ is smooth 
 then, by  H\"ormader's hypoellipticity Theorem~\cite{Ho67}, $u$ is smooth.
So let us assume  for the moment  that $f\in C^{\infty}(\T^d)$; then $u$ is in particular a classical solution satisfying the assumption of Zuily and Xu \cite{XuZu97}, thus Theorem \ref{Holder_regularity_linear}
 gives directly estimate 
\eqref{stima}. \\
If $f\in C_{\mathcal{X}}^{0,\alpha}(\T^d) $,  
one can bypass this obstacle by mollifications and  noticing that 
 estimate~\eqref{stima} is stable w.r.t. the mollification parameter.  
More precisely, when $f$ is not smooth but only H\"older, we introduce 
 $f_\zeta := f * \varphi_\zeta$, where $\varphi_\zeta(x) := \zeta^{-d} \varphi( x/\zeta)$
for $\zeta>0$ and $x\in \R^d$, and $\varphi$ is a mollification kernel, that is, 
 a nonnegative function of class $C^\infty$, with support in the unit ball of $\R^d$ and
$\int_{\R^d} \varphi(x)\,dx =1$. One can easily  check  that  
$f_\zeta \to f$ as $\zeta \to 0$
in $C_{\mathcal{X}}^{0,\alpha}(\T^d)$.   
Let $\{\zeta_n\}_{n\in \N}$ be a sequence of positive numbers converging to zero.  
For every $n\in \N$ there exists a unique solution
$u_n \in  C^{2,\alpha}(\T^d)$ to \eqref{ll-dp-eq} for $f= f_n\colonequals f_{\zeta_n}$, and by estimate~\eqref{stima},
we have $\|u_n- u_m \|_{ C_\Xc^{2, \alpha } (\T^d) } \le C\, \|f_n-f_m\|_{C_\Xc^{0,\alpha } (\T^d)}$  for some constant $C>0$ that does not depend on
$n, m \in \N$.  
Thus $\{u_n\}_{n\in \N}$ is Cauchy in $C_\Xc^{2, \alpha } (\T^d)$ (using  $f_n \to f$ in $C_{\mX}^\alpha(\T^d)$), hence it converges to some
$u$  in $C_\Xc^{2, \alpha } (\T^d)$. Passing to the limit as $n\to \infty$ in the equation $\mL u_{n} + \rho u_{n} =f_n$ 
and in the estimates $\| u_n \|_{C_{\mX}^{2, \alpha}(\T^d )  } \le C \|f_n\|_{C^\alpha_{\mX}(\T^d) }$
 we find that $u$ is a solution to~\eqref{ll-dp-eq}
and that estimate~\eqref{stima} is satisfied. 
\end{proof}
The existence and uniqueness for the subelliptic linear equation for $m$ is more technical. We first recall some heat kernel estimates and an ergodic result which will be key for the later results.
Consider the Cauchy problem 
\begin{equation}
\label{Solution-z}
\left\{ \begin{aligned}
&\frac{\partial z}{\partial t} - \mL z - g \cdot D_{\Xc} z = 0 \\
&z(0,x) = \phi(x)
\end{aligned}\right.
\end{equation}
 where $\phi$ is Borel and bounded and
 $g$ is H\"older-continuous. 
Then we have the following representation for the unique solution of \eqref{Solution-z}:
\[
z(t,x) = \int_{\T^d}K(t, x,y) \phi(y) \, dy\,,  
\] 
where the function $(t,x,y)\mapsto K(t,x,y)$, defined for $t>0$, $x,y\in\T^d$, $x\neq y$, 
is the heat kernel associated to the ultraparabolic operator $\partial_t - \mL - g \cdot D_{\Xc}$. 
We next recall some known Gaussian estimates satisfied by the heat kernel  $K(t,x,y)$: there exist 
constants $C=C(T)>0$ and $M>0$ (depending only on the H\"older norm of $g$)   such that 
\begin{equation}\label{heat-est}
\frac{C^{-1}}{|B_{d_{CC}}(x, t^{1/2}) |}e^{-M \,d_{CC}(x,y)^2/t } 
\le K(t,x,y) \le \frac{C}{|B_{d_{CC}}(x, t^{1/2}) |}e^{-M\, d_{CC}(x,y)^2/t } ,
\end{equation}
for all $T>t>0$ and $x\in \T^d$, where by $|B_{d_{CC}}(x, t^{1/2}) |$  we indicate the Lebesgue measure of the Carnot-Carath\'eodory ball centred at $x$ and of radius $R=t^{1/2}$.
This estimate has been firstly proved in the subelliptic case by \cite{JS86} for ``sums of squares'' operators on
compact manifolds and later generalised by many authors: 
in particular we refer to \cite{BBL10}.\par

  We now need to recall the following ergodic result.
	\begin{thm}[\cite{Be88}, Theorem II.4.1]\label{erg-thm}
Let $(S, \Sigma)$ be a compact metric space equipped with its Borel $\s$-algebra $\Sigma$. Let $P$ be a linear operator
defined on the Banach algebra of Borel bounded functions on $S$. 
We assume that 
$
\|P \| \le 1 $ and $ P(1) =1,
$	 and  there exists
 $\delta >0$ such that
\bel{cr-ass}
P\un_E(x) - P\un_E(y)  \le 1- \delta, \qquad \forall x,y \in S , \; E\in\Sigma \,,
\eeq
where by $\un_E(\cdot)$ we indicate the characteristic function of the Borel set $E$.\par 
Under these assumptions there exists a unique probability measure $\pi$ on $S$ such 
that 
\bel{erg-es}
\left| P^n \phi (x) - \int_S \phi \, d\pi \right| \le C e^{- k n } \|\phi \|_\infty \quad \forall x \in S\,,
\eeq
where $C= 2/(1-\delta)$, $k= - \ln{(1-\delta)}$. Then the measure $\pi$ is the unique {\em invariant measure} of the operator $P$, that is
the unique probability measure  satisfying
\[
\int_S P\phi \, d\pi = \int_S  \phi \, d\pi ,
	\]
	for every bounded Borel function $\phi$ on $S$. 
	\end{thm}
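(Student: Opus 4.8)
The plan is to prove this (a form of Doeblin's ergodic theorem) by showing that hypothesis \eqref{cr-ass} makes $P$ a strict contraction for the oscillation (span) seminorm $\operatorname{osc}(\phi):=\sup_S\phi-\inf_S\phi$. I would first record that $\|P\|\le1$ and $P\un=\un$ force $P$ to be positive, hence monotone: for $0\le\phi\le\un$ the function $\psi=2\phi-\un$ has $\|\psi\|_\infty\le1$, so $\|P\psi\|_\infty\le1$, i.e. $0\le P\phi\le\un$, and positivity on general nonnegative functions follows by homogeneity. The core estimate is
\[
\operatorname{osc}(P\phi)\le(1-\delta)\,\operatorname{osc}(\phi)\qquad\text{for every bounded Borel }\phi .
\]
Hypothesis \eqref{cr-ass} is precisely this inequality for $\phi=\un_E$ (note $\operatorname{osc}(\un_E)=1$ when $\emptyset\neq E\neq S$). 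To pass to general $\phi$ I would use the layer-cake decomposition: a simple function with attained values $a=t_0<t_1<\cdots<t_N$ can be written as the finite sum $\phi=a\,\un+\sum_{j=1}^N(t_j-t_{j-1})\un_{A_j}$ with $A_j=\{\phi\ge t_j\}$; applying $P$ by linearity and subtracting the values at two points $x,y$ gives $P\phi(x)-P\phi(y)=\sum_j(t_j-t_{j-1})\big(P\un_{A_j}(x)-P\un_{A_j}(y)\big)\le(1-\delta)\operatorname{osc}(\phi)$ by \eqref{cr-ass}. Since simple functions are uniformly dense in the bounded Borel functions and $\|P\|\le1$ makes $\phi\mapsto\operatorname{osc}(P\phi)$ sup-norm continuous, the estimate extends to all bounded Borel $\phi$.

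I would then iterate. Writing $m_n:=\inf_S P^n\phi$ and $M_n:=\sup_S P^n\phi$, monotonicity of $P$ applied to $P^n\phi\ge m_n\un$ yields $P^{n+1}\phi\ge m_n\un$, so $(m_n)$ is nondecreasing and, symmetrically, $(M_n)$ is nonincreasing; the contraction estimate gives $M_n-m_n=\operatorname{osc}(P^n\phi)\le(1-\delta)^n\operatorname{osc}(\phi)\to0$. Hence $m_n$ and $M_n$ share a common limit $\ell(\phi)$, and since $m_n\le P^n\phi\le M_n$ pointwise,
\[
\|P^n\phi-\ell(\phi)\|_\infty\le M_n-m_n\le(1-\delta)^n\operatorname{osc}(\phi)\le 2(1-\delta)^n\|\phi\|_\infty ,
\]
which gives \eqref{erg-es} with $C=2/(1-\delta)$ and $k=-\ln(1-\delta)$, since $2\le 2/(1-\delta)$ and $(1-\delta)^n=e^{-kn}$.

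It remains to realise the limit functional as a measure. The map $\phi\mapsto\ell(\phi)$ is linear, positive, unital ($\ell(\un)=1$) and satisfies $|\ell(\phi)|\le\|\phi\|_\infty$, and it is $P$-invariant by construction: $\ell(P\phi)=\lim_n P^{n+1}\phi=\ell(\phi)$. Restricted to $C(S)$ it is a positive unital functional, so the Riesz–Markov theorem on the compact metric space $S$ produces a unique Borel probability measure $\pi$ with $\ell(\phi)=\int_S\phi\,d\pi$ for continuous $\phi$. To upgrade this to all bounded Borel $\phi$ — which is what lets one read \eqref{erg-es} and the invariance relation as integrals against $\pi$ — I would set $\pi(E):=\ell(\un_E)=\lim_n P^n\un_E(x_0)$ for a fixed $x_0$ and observe that each $E\mapsto P^n\un_E(x_0)$ is a genuine probability measure (the $n$-step transition law), so that by the Vitali–Hahn–Saks theorem its setwise limit $\pi$ is again countably additive; the identity $\ell(\phi)=\int_S\phi\,d\pi$ then holds on simple functions and, by uniform approximation, on all bounded Borel $\phi$. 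Uniqueness is then immediate: any invariant probability $\pi'$ obeys $\int_S\phi\,d\pi'=\int_S P^n\phi\,d\pi'$ for all $n$, and letting $n\to\infty$ with the uniform convergence above forces $\int_S\phi\,d\pi'=\ell(\phi)=\int_S\phi\,d\pi$.

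The conceptual heart, and the only point where \eqref{cr-ass} is used, is the span-contraction estimate; the rest of the convergence is a soft monotone-squeeze argument. The one genuine technical delicacy I anticipate is the measure-theoretic identification of $\ell$ with integration against a countably additive probability measure valid on \emph{all} bounded Borel functions, for which the transition-kernel structure of $P^n$ combined with Vitali–Hahn–Saks is the clean route; were one content to state \eqref{erg-es} only for continuous test functions, the Riesz–Markov step alone would suffice and this delicacy would not arise.
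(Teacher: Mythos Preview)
The paper does not prove this theorem; it is quoted verbatim from \cite{Be88} and used as a black box (via Remark~\ref{DbC-rem}) for the heat-kernel operator $P\phi(x)=\int_{\T^d}K(1,x,y)\phi(y)\,dy$. So there is no ``paper's proof'' to compare with.

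Your argument is the standard Doeblin span-contraction proof and is correct in its main line: the positivity of $P$ from $\|P\|\le1$ and $P\un=\un$, the oscillation estimate $\operatorname{osc}(P\phi)\le(1-\delta)\operatorname{osc}(\phi)$ via the layer-cake decomposition, and the monotone squeeze $m_n\uparrow$, $M_n\downarrow$ giving uniform convergence with the stated constants are all fine. The one step you rightly flag as delicate --- representing the limit functional $\ell$ by a \emph{countably additive} measure valid on all bounded Borel $\phi$ --- does need more than the bare hypotheses written in the statement: $\|P\|\le1$, $P\un=\un$ and \eqref{cr-ass} alone do not force $E\mapsto P\un_E(x)$ to be $\sigma$-additive, so your appeal to Vitali--Hahn--Saks implicitly assumes a transition-kernel structure that is not part of the stated hypotheses. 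In the paper's only application $P$ is an integral operator against the heat kernel, so this structure is present and your route goes through cleanly; for the abstract statement as printed one should either add such a hypothesis (kernel form, or sequential continuity of $P$ under bounded monotone limits) or restrict \eqref{erg-es} to $\phi\in C(S)$, where Riesz--Markov alone suffices.
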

	The measure $\pi$ is called the {\em ergodic measure} of  the operator $P$ (for more details on ergodic measure see e.g.  \cite{DP06}).
	Property \eqref{erg-es}  is a ``strong'' ergodic property: it implies the convergence
		\[
	\lim_{n\to \infty}	P^n\phi  =   \int_S \phi \, d\pi \qquad \text{uniformly} 
		\]
		but also  provides  an exponential decay estimate on the convergence rate.

	\begin{rem}\label{DbC-rem}
	As noted also in \cite{Be88}, when applying the ergodic theorem 
		above usually one checks if the so-called Doeblin condition is satisfied. 
More precisely, we assume that $(S, \Sigma)$ is equipped with a probability measure $\mu$ and that $P$ has the form
		\[
		P\phi(x) = \int_S k(x,y) \phi(y) \, d\mu (y ) ,
		\]
		for some Borel and bounded kernel $k\colon S\times S \to \R$, 
		and that there exist a set $U$ with $\mu(U)>0$ and $\delta_0>0$
such that ({\em Doeblin condition}) 
\bel{DbC-est}
k(x,y) \ge \delta_0 >0 \qquad \forall x \in S, \; y \in U \,. 
\eeq
It is easy to check that \eqref{DbC-est} implies \eqref{cr-ass} with $\delta = \mu(U) \delta_0$.  In fact, using $S=\big(S\cap E\big) \cup \big(S\cap E^c\big) $:
$$
P\un_E(x) - P\un_E(y) 
=1-\int_S k(y,z)\un_{E}(z)\, dz-
\int_S k(x,z)\un_{E^c}(z)\, dz
\leq 1-\delta_0\bigg[\big|E^c\cap U\big|+\big| E\cap U\big|\bigg]\!\!\!=1\!\!-\delta_0|U|.
$$
	\end{rem}

\medskip 

Next we show existence and uniqueness for the weak solution of the subelliptic linear equation associated to  $m$.

\begin{lem}\label{adj-prob}
Assume \eqref{(I)} and that  $g\colon \T^d \to \R^m$ is H\"older continuous. 
Then the problem 
\bel{ad-pr-eq}
\left\{ \begin{aligned}
&\mL^* m - \textrm{div}_{\mX^*}(m g)  =0 \;\; \text{in } \T^d \,,\\
& \int_{\T^d} m \, dx =1\,,
\end{aligned} \right.
\eeq 
has a unique weak solution $m$ in $H_{\Xc}^1(\T^d)$. 
Moreover $0< \delta_0 \le m \le \delta_1$, for some $\delta_1, \delta_0$ depending only on the H\"older norm of $g$
and the coefficients of $\mL$ (i.e. the coefficient of the vector fields $X_1,\dots, X_m$). 
\end{lem}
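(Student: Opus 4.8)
The plan is to realise $m\,dx$ as the invariant probability measure of the Markov semigroup attached to the ultraparabolic Cauchy problem \eqref{Solution-z}, and to read off existence, the two‑sided bounds and uniqueness from the ergodic Theorem~\ref{erg-thm} together with the Gaussian estimates \eqref{heat-est}. First I would introduce the transition operators $P_t\phi(x)=\int_{\T^d}K(t,x,y)\phi(y)\,dy$ on bounded Borel functions on $\T^d$: since $\partial_t-\mL-g\cdot D_\mX$ has no zero‑order term one has $P_t1=1$; the parabolic maximum principle gives $\|P_t\|\le1$; and $P_{t+s}=P_tP_s$. Fixing $t=1$ and setting $P:=P_1$, compactness of $\T^d$ gives $\sup_{x,y}d_{CC}(x,y)<\infty$ and, via the homogeneous‑dimension lower volume bound, $\inf_x|B_{d_{CC}}(x,1)|>0$; hence the lower estimate in \eqref{heat-est} yields a constant $\delta_0>0$, depending only on the H\"older norm of $g$ and on $\mX$, with $K(1,x,y)\ge\delta_0$ for all $x,y\in\T^d$. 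Taking $U=\T^d$ and $\mu$ the normalised Lebesgue measure, this is precisely the Doeblin condition \eqref{DbC-est}, so by Remark~\ref{DbC-rem} the operator $P$ satisfies \eqref{cr-ass}, and Theorem~\ref{erg-thm} (with $S=\T^d$) furnishes a unique $P$‑invariant probability measure $\pi$ obeying \eqref{erg-es}.

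Next, since $P_t$ commutes with $P$, each measure $P_t\pi$ is again $P$‑invariant, hence $P_t\pi=\pi$ by uniqueness, so $\pi$ is invariant for the whole semigroup: $\int_{\T^d}P_t\phi\,d\pi=\int_{\T^d}\phi\,d\pi$ for all $t>0$ and all bounded Borel $\phi$. Writing the case $t=1$ explicitly and applying Fubini (legitimate, $K(1,\cdot,\cdot)$ being bounded and nonnegative on $\T^d\times\T^d$) identifies $\pi$ with $m\,dx$, where
\[
m(y)=\int_{\T^d}K(1,x,y)\,d\pi(x)=\int_{\T^d}K(1,x,y)\,m(x)\,dx,\qquad \int_{\T^d}m\,dx=1.
\]
Since at $t=1$ the kernel $K$ is bounded above and below on $\T^d\times\T^d$ (again by the two volume bounds), this representation gives immediately $0<\delta_0\le m\le\delta_1$, with $\delta_0,\delta_1$ depending only on the H\"older norm of $g$ and on the coefficients of the vector fields.

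To check that $m$ solves \eqref{ad-pr-eq}, I would differentiate the invariance identity at $t=0$: for $\phi\in C^\infty(\T^d)$ the map $t\mapsto P_t\phi$ is differentiable with $\frac{d}{dt}\big|_{t=0^+}P_t\phi=\mathcal{G}\phi$, $\mathcal{G}$ the generator of $(P_t)$, so $\int_{\T^d}P_t\phi\,d\pi\equiv\int_{\T^d}\phi\,d\pi$ yields $\int_{\T^d}\mathcal{G}\phi\,m\,dx=0$ for all $\phi\in C^\infty(\T^d)$; after an integration by parts this is exactly the weak (distributional) formulation of $\mL^*m-\textrm{div}_{\mX^*}(mg)=0$. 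Finally, as $m\in L^\infty(\T^d)$ is a bounded weak solution of this linear subelliptic equation written in divergence form, a Caccioppoli energy estimate — testing with $m$ itself, justified by a Friedrichs mollification argument since $m$ and $g$ are bounded — yields $\|m\|_{H^1_\mX(\T^d)}\le C$, so in particular $m\in H^1_\mX(\T^d)$.

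For uniqueness, let $m$ be any weak solution of \eqref{ad-pr-eq} in $H^1_\mX(\T^d)$. Testing its weak formulation against $\phi=P_t\psi$ with $\psi\in C^\infty(\T^d)$ (an admissible test function by the parabolic regularity of $(P_t)$), and using $\mathcal{G}(P_t\psi)=\partial_t(P_t\psi)$, gives $\frac{d}{dt}\int_{\T^d}(P_t\psi)\,m\,dx=0$, hence $\int_{\T^d}(P_t\psi)\,m\,dx\equiv\int_{\T^d}\psi\,m\,dx$; letting $t\to\infty$ and using the uniform convergence $P_t\psi\to\int_{\T^d}\psi\,d\pi$ (which follows from \eqref{erg-es} and $\|P_s\|\le1$) together with $\int_{\T^d}m\,dx=1$ gives $\int_{\T^d}\psi\,m\,dx=\int_{\T^d}\psi\,d\pi$ for every $\psi\in C^\infty(\T^d)$, i.e.\ $m\,dx=\pi$. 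The main obstacle I anticipate is the regularity input used in the last two paragraphs: one needs that, for the ultraparabolic operator $\partial_t-\mL-g\cdot D_\mX$ with merely H\"older‑continuous $g$, the semigroup $(P_t)$ is differentiable on $C^\infty$ with the expected generator and maps $C^\infty$ into a class of admissible test functions, and one needs the Caccioppoli estimate for the degenerate operator; these rest on \eqref{heat-est} and the parabolic Schauder/regularity theory for H\"ormander operators, whereas the verification of the Doeblin condition and the application of Theorem~\ref{erg-thm} are routine.
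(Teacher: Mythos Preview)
Your argument is essentially correct, but it takes a route genuinely different from the paper's, and the differences are worth noting.

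The paper proceeds in the opposite order. It first establishes existence and uniqueness of an $H^1_\mX$ weak solution by purely functional--analytic means: for large $\eta$ the resolvent problem $\mL u - g\cdot D_\mX u + \eta u = \varphi$ is well--posed on $H^1_\mX(\T^d)$ by Lax--Milgram (coercivity and boundedness of the associated bilinear form), giving a compact operator $T_\eta$ on $L^2$ via the compact embedding $H^1_\mX\hookrightarrow L^2$; the Fredholm alternative then reduces the question to showing that the kernel of the primal operator $\mL + g\cdot D_\mX$ is one--dimensional, which follows from Bony's strong maximum principle. Only after having the unique $H^1_\mX$ solution $m$ in hand does the paper invoke the ergodic theorem and the heat--kernel bounds to identify $m\,dx$ with the invariant measure and read off $\delta_0\le m\le\delta_1$. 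In particular, the paper never needs to differentiate the semigroup at $t=0$, nor to run a Caccioppoli/Friedrichs mollification argument to upgrade an $L^\infty$ distributional solution to $H^1_\mX$: those two steps, which you correctly flag as the delicate points of your scheme, are simply absent.

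What each route buys: the paper's Fredholm argument is shorter and self--contained, needing no parabolic regularity for H\"ormander operators with merely H\"older drift beyond the Gaussian bounds~\eqref{heat-est}; your approach is more direct from the probabilistic side and would transplant to settings where the compact--embedding/Fredholm machinery is unavailable, but at the price of the extra analytic input you mention. The one place where your proof and the paper's coincide is the invariance computation $\frac{d}{dt}\int (P_t\psi)m\,dx=0$ leading to $m\,dx=\pi$: the paper uses exactly this to identify its Fredholm solution with the ergodic measure, whereas you use it for uniqueness.
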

  A solution $m$ of the PDE in \eqref{ad-pr-eq} is to be understood in the weak (or $H^1_\mX$) sense, i.e.  
we define the bilinear form 
\bel{bil-for}
\langle u, v  \rangle \colonequals \int_{\T^d }\left(-\frac12 \sum_{i=1}^mX_iu\cdot X^*_iu - (g \cdot D_{\mX} u)\, v   \right) d x
\eeq
and its dual $\langle u, v\rangle^*  \colonequals \langle v, u\rangle$ 
for all $u, v\in H_\mX^1(\T^d)$. Then $m$ is a {\em solution} of the PDE in \eqref{ad-pr-eq} if  
$\langle m, v\rangle^* =0$ for all $v\in H^1_{\mX}(\T^d)$.

\begin{proof}
The proof follows  the approach introduced in  \cite[Theorem 3.4]{BLP} 
for uniformly elliptic operators
and in \cite[Theorem~II.4.2
]{Be88}.
We want first to show that, for $\eta >0$ large enough and for every $\vf \in L^2(\T^d)$, the problem
\bel{aux-res}
\mL u - g \cdot D_{\mX} u  + \eta \,u = \vf  \,
\eeq 
is well-posed in $H_{\Xc}^1(\T^d)$ in the standard weak sense, that is
\[
\int_{\T^d}
  \left(-\frac12 \sum_{i=1}^mX_iu\cdot X^*_iu - (g \cdot D_{\mX} u)\, v  + \eta \,u\, v \right) d x
 =\int_{\T^d}\vf \, v\,dx, \quad \forall \; v\in C^{\infty}_0\big(\T^d\big).
\]
The previous well-posedness is proved by standard Hilbert space arguments.
In fact, on the space $H_{\Xc}^1(\T^d)$, we consider the bilinear form 
\[
\begin{gathered}
  \langle \cdot ,  \cdot \rangle_{\eta } \colon  H_{\Xc}^1(\T^d) \times H_{\Xc}^1(\T^d) \to   \R  \\
 (u, v) \mapsto  \langle u, v\rangle_{\eta} := 
\langle u, v\rangle + \int_{\T^d} \eta\, u\, v \,dx  
\end{gathered}
\] 
for all $u, v \in H^1_{\mX}(\T^d)$, where $\langle u, v \rangle$ is defined in~\eqref{bil-for}.
For $\eta>0$ large enough and for some $c_1 > 0$, $c_2 \ge 0$  we claim that for all $u,v \in H_{\Xc}^1(\T^d)$
\begin{align}
\langle u, u \rangle_{\eta} 
& \label{f-est}
\ge c_1 \| u\|^2_{ H_{\Xc}^1(\T^d) },\\
|\langle u, v \rangle_{\eta}| 
& \label{s-est} \le c_2 \| u\|_{ H_{\Xc}^1(\T^d) }  \| v\|_{ H_{\Xc}^1(\T^d) }.
\end{align}
We first check  estimate \eqref{f-est}. Since $g$ and $\textrm{div} X_i$ are by assumption continuous, hence 
bounded on $\T^d$, there exists $M\ge 0$ such that $\|g\|_\infty \le M$ and $\|\textrm{div} X_i\|_\infty \le M$. 
Moreover
\begin{align*}
\langle u, u \rangle_{\eta} & = \int_{\T^d}  \bigg( \frac12 \sum_{i=1}^m |X_i u|^2 + \frac12 
  \sum_{i=1}^m X_i u \,  (\textrm{div}X_i) u   
	-  \big(g \cdot D_{\mX} u\big)u  + \eta \,u^2 \bigg) \, dx\\  
&\ge \int_{\T^d}  \bigg( \frac12 \sum_{i=1}^m |X_i u|^2 - \frac12 
  \sum_{i=1}^m |X_i u|  |\textrm{div}X_i| |u|    
	-  |g|  | D_{\mX} u |  |u|  + \eta \,u^2 \bigg) \, dx\,.
\end{align*}
Using the inequality $ab \le (1/4)a^2 +  b^2$ and recalling $  | D_{\mX} u |^2=\sum_{i=1}^m |X_i u|^2$,
we find 
\begin{align*}
\langle u, u \rangle_{\eta} &\ge   \int_{\T^d}  \bigg( \frac12| D_{\mX} u |^2 -  
  \frac18 | D_{\mX} u |^2   -  |\textrm{div}X_i|^2 |u|^2    
	-   \frac14   | D_{\mX} u |^2 -  |g|^2 |u|^2  + \eta \, u^2 \bigg) \, dx  \\
	&\ge  \frac18 \int_{\T^d}  | D_{\mX} u |^2 \, dx + 
	(\eta -  2M^2)  \int_{\T^d} u^2 \,dx   		\,,
\end{align*}
from which, taking $\eta > 2M^2$, we obtain the first estimate~\eqref{f-est} 
for a suitable $c_1 >0$ (in particular $c_1=\min\big\{1/8,\eta-2M^2\big\}>0$). 
For   estimate \eqref{s-est}
similarly
\[
|\langle u, v \rangle_{\eta}| \le \frac12 \int_{\T^d} |D_{\mX} u| |D_{\mX}v| \, dx 
+M \int_{\T^d} |D_\mX u| |v|\, dx + \eta \int_{\T^d} |uv| \, dx  \,,
\] 
and by the Cauchy-Schwarz inequality for integrals
\begin{align*}
|\langle u, v \rangle_{\eta}| \le
& \frac12 \| D_\mX u \|_{L^2(\T^d) } \| D_\mX v \|_{L^2(\T^d) }
+(M + \eta) \|u \|_{L^2(\T^d) } \|v \|_{L^2(\T^d) }
\\
\le &\left(M+\eta+\frac{1}{2}\right)\norma{u}_{H^1_{\mX}(\T^d)}\, \norma{v}_{H^1_{\mX}(\T^d)},
\end{align*}
where we have used simply $a\,b+c\,d\leq (a+c)\,(b+d)$ for every non-negative scalars $a,b,c$ and $d$. This 
gives \eqref{s-est} with $c_2 =\left(M+\eta+\frac{1}{2}\right) > 0$. 
 Then the claim is proved.\par
Thus the bilinear form $\langle\cdot,\cdot \rangle_{\eta}$
is coercive and continuous.
Clearly, 
\[
  H_{\Xc}^1(\T^d)   \ni u \mapsto    \int_{\T^d}  u \vf \, dx\, \in \R   
\]
is a continuous linear functional on $H_{\Xc}^1(\T^d)$. 
Therefore by the Lax-Milgram Theorem there exists a unique 
$u\in H_{\Xc}^1(\T^d)$ such that, for all $v \in H_{\Xc}^1(\T^d)$,
\[
\langle u, v \rangle_{\eta} = \int_{\T^d}\vf v \, dx.
\]
For every $\eta>0$ large enough (i.e. $\eta>2M^2$), we define the following  linear operator
$
T_\eta \colon L^2(\T^d) \to L^2(\T^d)\, 
$
by 
$T_\eta \vf:=u,$
 where $u$ is the unique solution to \eqref{aux-res}.\par
  Note that  $T_\eta \vf=u\in  H_\Xc^1(\T^d) \subset L^2(\T^d)$.
Since the embedding of $H_{\Xc}^1(\T^d)$ into $L^2(\T^d)$ is compact (see Lemma \ref{comp-emb}), $T_\eta$ is a  linear compact operator.
Thus the equation 
\[
\mL^* m - \textrm{div}_{\mX^*}(m g)  =0 \quad \text{in } \T^d 
\]
is equivalent to 
\bel{aux-res-b}
(I - \eta T_\eta)^* m  = 0\,,  
\eeq
where $I$ is the identity operator of $L^2(\T^d)$. \\
Since $T_\eta$ is compact,  the Fredholm alternative applies. 
Indeed 
$I - \eta T_\eta$ is a Fredholm operator of index zero (see e.g. \cite[Lemma~4.45]{AA02}). This means that the kernels of $I - \eta T_\eta$ and $(I - \eta T_\eta)^*$ have the same dimension, in other words, the number of
linearly independent solutions of the equation $(I - \eta T_\eta)^* =0 $ is equal to the number of linearly independent solutions of the
equation $I - \eta T_\eta =0$. 
Then we must find the number of linearly independent solutions of 
$
(I - \eta T_\eta) u= 0\,,
$
that is
\[
\mL u + g (x) \cdot D_\Xc u =0\,. 
\]
By \cite[Theorem~3.3]{XuZu97} the solution $u$ belongs to   $C^{2, \alpha}_\Xc(\T^d)$
for some $\alpha \in (0,1)$. 
Moreover the operator $\mL + g \cdot D_{\Xc}$  satisfies the strong maximum principle, see \cite{Bo69} and \cite{BL03}. 
Thus by the considerations above (Fredholm alternative) the equation \eqref{aux-res-b}, and hence \eqref{aux-res},
admits a unique solution $m\in H^{1}_\Xc(\T^d)$ up to a multiplicative constant.\par
The upper and lower bounds  for $m$ (that imply in particular the positivity of $m$) are shown by its interpretation as the ergodic measure 
of the diffusion having generator $\mL + g \cdot D_\Xc$. They rely on an ergodic theorem and on
the Gaussian estimates \eqref{heat-est}. 
In fact, using that $d_{CC}(x,y)$ is continuous on $\T^d$ (compact), we can easily see from~\eqref{heat-est} (by simply taking the maximum and the minimum of $d^2_{CC}(x,y)$ on $\T^d$) that there exits $\d_0, \d_1>0$ such that 
\begin{equation}
\label{boundsForM}
\d_0\leq K(1,x, y) \leq \d_1\qquad  \forall x,y \in  \T^d.
\end{equation}
Therefore we can apply Theorem~\ref{erg-thm} and Remark~\ref{DbC-rem} with $S=\T^d$, $\Sigma$ the Borel $\sigma$-algebra on $\T^d$, $\mu$
the Lebesgue measure on $\T^d$ and operator $P$ defined by
\[
P\phi(x) = z(1, x) = \int_{\T^d} K(1,x,y) \phi(y) \, dy \,. 
\]		
Note that $P^n\phi(x) = z(n, x)$. Then Theorem~\ref{erg-thm} implies the existence of a unique invariant  probability measure $\pi$
such that 
\bel{conv-z-pi}
\left| z(n, x) - \int_{\T^d} \phi(y) \, d \pi(y)  \right| \le C  e^{-kn } \|\phi\| \,.
\eeq	
	Using  $m$ defined as the unique solution of \eqref{ad-pr-eq} and $z(t,x)$ 
	defined as unique solution of \eqref{Solution-z}, we want to show the following claim:
\bel{id-z-m} 
\int_{\T^d} z(t, x) m (x) \, dx  =  \int_{\T^d} \phi(x) m(x) \, dx,  \; \forall \, t\geq 0.
\eeq 
To prove the previous claim, first note that for $t=0$ \eqref{id-z-m} is trivially satisfied by the initial condition.
We want to show that the right-hand side in \eqref{id-z-m} is constant in time, so we look at
$$
\frac{d}{dt} \int_{\T^d} z(t, \cdot) m  \, dx  =\int_{\T^d} \partial_t z m  \, dx  
=
\int_{\T^d} \big(\mL z +g\cdot D_{\mX} z\big)
\, m  \, dx=
\int_{\T^d} \big(\mL^*m- \textrm{div}_{\mX^*} (m\,g)\big) z\,dx
=0.
$$
Then $\int_{\T^d} z(t, x) m (x) \, dx =\int_{\T^d} z(0, x) m (x) \, dx$, for all $t\geq 0$, that proves  the claim  \eqref{id-z-m}.\par
By using \eqref{conv-z-pi} and taking $t=n$ in \eqref{id-z-m} and passing to the limit as $n\to +\infty$, we can deduce:
\[
 \int_{\T^d}  \phi(x) \,m (x)\, dx 
=\int_{\T^d} \bigg(\int_{\T^d} 
 \phi (x)\, d\pi (x)  \bigg) m(x)d x=\int_{\T^d} 
 \phi (x)\, d\pi (x) ,
\] 
for any Borel bounded function $\phi$ on $\T^d$ (where we have used $
\int_{\T^d} 
m =1
 $). Thus $m$ is the density measure 
of the probability measure $\pi$ and therefore $m \ge 0$  a.e. on $\T^d$. \par
Using \eqref{boundsForM} together with  \eqref{id-z-m} for $t=1$, it follows that 
\[
\delta_1 \int_{\T^d } \phi (y) \, dy \ge \int_{\T^d}  \phi(y) m(y) \, dy 
\ge \delta_0 \int_{\T^d } \phi (y) \, dy,
\]
for any bounded and Borel function $\phi \ge 0$ on $\T^d$. 
Since $\phi\ge 0$ is arbitrary, one can deduce 
$
\delta_0 \le m \le \delta_1 ,
$
thus Lemma~\ref{adj-prob} is proved. 
\end{proof}

We can now prove our first existence result for a subelliptic MFG system.

\begin{proof}[Proof of Theorem~\ref{dsqg}]
The proof is based on a corollary of  Schauder's fixed point theorem. 
 More precisely, we apply \cite[Theorem~11.3]{GT} 
which states that, if $T\colon \mB \to \mB$ is a continuous and compact operator in the Banach space 
$\mB$ such that the set 
$\{ u \in \mB \; : \; s T u = u ,\;  0\le s \le 1 \}$ is bounded, then 
$T$ has a {\em fixed point}, that is, there exists $u \in \mB$ such that $Tu =u$.
We define 
the Banach space 
$
\mathcal{B} = C_\Xc^{1, \alpha }(\T^d)\,, 
$
where $0<\alpha < 1$ is to be fixed later, and the
operator  
$
T \colon \mathcal{B} \to \mathcal{B} \,, 
$
 according to the scheme
$
v \mapsto m \mapsto  u \,.  
$ 
This means that, given $v \in \mathcal{B}$, we solve the second equation 
together with the corresponding conditions 
\[\left\{
\begin{aligned}
& \mL^*m - \textrm{div} ( m g ( x, D_\Xc v ) )   = 0 \,  \text{ in }\T^d\,,  \\
& \int_{\T^d} m \, dx =1 \,, \quad m>0 \, \text{ in } \T^d
\end{aligned}\right.
\]
and by Lemma~\ref{adj-prob} we find a unique solution $m \in H^1_\mX(\T^d)\cap L^\infty(\T^d)$. Moreover $m$ is bounded.
By assumption ${\bf (IV)}$, $V[\cdot]$ is regularizing, hence the function $f(x) = V[m](x) - H(x, D_\Xc v (x) )$ belongs to 
$C^\alpha_\Xc (\T^d)$.  Thus we apply Lemma~\ref{ll-dp} and deduce that
\begin{equation}\label{finally}
\mL u + \rho u +  H(x, D_\Xc v) =  V[m] 
\end{equation}
admits a unique solution $u \in C^{2}_\Xc(\T^d)$.  Set 
$T v = u,$
where $u$ is the unique solution of \eqref{finally},
 it is easy to check that $T$ is continuous and compact, using  that
$C_\Xc^2(\T^d)$ is compactly embedded into $C^{1,\alpha }_\Xc(\T^d)$ for all $\alpha \in (0,1)$.
Therefore,  in order to apply \cite[Theorem~11.3]{GT} 
we need to show
that \[
\mathcal{A}=\{ u\in \mathcal{B} \;: \;  \exists \,0\le s\le 1 \text{ such that } u = s T u\}
\]
is bounded in $C_\Xc^{1, \alpha }(\T^d)$.  So note
that:
 if $u$ is a fixed point of $sT$ (i.e. $sTu=u$), then it is also a solution of 
\begin{equation}
\label{fare}
\mL u + \rho u + s H(x, D_\Xc u) = s V[m] .
\end{equation}
Then looking at the minimum and maximum of $u$, we find
\[
\|u\|_{\infty, \T^d} \le \frac{s}\rho  \sup_{m\in H^1_{\Xc}(\T^d)} \|V[m] -H (\cdot, 0 )\|_{\infty, \T^d},
\]
which is finite since $V[\cdot]$ is by assumption bounded. \par
The key step is now to apply  $C_\Xc^{1, \alpha }$-regularity  for semilinear equation (Theorem \ref{Holder_regularity_semilinear}) that gives
\begin{equation}
\label{prelievo}
\| u \|_{C_\Xc^{1, \alpha } (\T^d) } < C 
\end{equation}
 for some constant $C>0$ and $\alpha \in (0,1)$ independ of $u$
and $s\in [0,1]$. \par
Note that, 
in order to apply the given theorem, we should write our equation in divergence form, which we can easily do by
using the relation $X_i^* = -X_i - \textrm{div} X_i$ 
(by adding the term  $-\sum_{j=1}^n (\textrm{div}X_j) X_j$ 
to the Hamiltonian). Observe that the new Hamiltonian has the same properties of the original Hamiltonian; in particular, it
grows at most quadratically in $D_\Xc u$ (in fact the functions $\textrm{div}  X_j$ are bounded due to the $C^{\infty}$-regularity of the vector fields 
$X_j$). 
Using estimate \eqref{prelievo} we can look at the semilinear PDE \eqref{fare} as a linear PDE with an H\"older right-hand side  $ f(x)=  s V[m] -s H(x, D_\Xc u)-\sum_{j=1}^m (\textrm{div}  X_j) X_ju 
$; hence we can 
apply the
 Schauder type result  for linear equations proved in
 \cite{XuZu97} (see Theorem \ref{Holder_regularity_linear}), that  implies 
$u \in C_\Xc^{2, \alpha } (\T^d) $ and
$
\| u \|_{C_\Xc^{2, \alpha } (\T^d) } < C.
$
To conclude we need only to remark that $\textrm{div}  X_j \in C^{0, \alpha}_\Xc(\T^d)$ (since the vector fields are smooth on a compact domain)  in order to apply the previous   $C_\Xc^{2, \alpha }$-estimates.  
\end{proof}

\section{Ergodic system with linear growth}

We now want to study the ergodic problem that can be obtained by letting $\rho \to 0^+$ in \eqref{sys-rho}.
However, to study this,  we need  a more restrictive assumption on the Hamiltonian, i.e. we assume that  $H$ grows at most linearly in $|D_\Xc u|$. More precisely:

\begin{enumerate}
\item[{\bf (II-L)}]  $H(x,q)=H(x,\sigma(x) p)$ grows at most linearly w.r.t. $q$,
i.e. $\exists\, C \ge 0$ such that 
\bel{H-li-gr}|H(x, q )| \le C( |q | +  1   ) \quad \forall x\in\T^d\,,\; q \in \R^m .\eeq
\end{enumerate}
We prove existence of solutions for the system of ergodic PDEs under condition 
{\bf (II-L)}.

\begin{thm}[Existence]\label{erg-thm-lin} 
Assume  \eqref{(I)},  {\bf (II-L)},  {\bf (III)},  {\bf (IV)} and that  $H(x,q)$ 
is locally H\"older, then the system 
 \bel{sys-erg}
\left\{
\begin{aligned}
&\mL u + \l+  H(x, D_\Xc u) = V[m]   \\
&\mL^*m - \textrm{div}_{\mX^*} \big( m g ( x, D_\Xc u ) \big)   = 0  \\
& \int_{\T^d} u \, dx =0, \quad \int_{\T^d} m\, dx =1 ,
 \quad  m > 0  
\end{aligned} \right. .
\eeq 
has a solution $(\l, u, m) \in \R\times C_\Xc^2(\T^d)  \times C(\T^d)$.  
\end{thm}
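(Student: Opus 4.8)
The idea is to construct $(\lambda,u,m)$ as a limit, along a sequence $\rho\downarrow0$, of the solutions $(u_\rho,m_\rho)$ of the discounted system \eqref{sys-rho} furnished by Theorem~\ref{dsqg}. For $\rho>0$ put
\[
v_\rho:=u_\rho-\int_{\T^d}u_\rho\,dx,\qquad \lambda_\rho:=\rho\int_{\T^d}u_\rho\,dx,
\]
so that $\int_{\T^d}v_\rho\,dx=0$, $D_\Xc v_\rho=D_\Xc u_\rho$, and $(v_\rho,\lambda_\rho,m_\rho)$ solves $\mL v_\rho+\lambda_\rho+\rho v_\rho+H(x,D_\Xc v_\rho)=V[m_\rho]$ and $\mL^*m_\rho-\textrm{div}_{\mX^*}(m_\rho\,g(x,D_\Xc v_\rho))=0$, together with $\int_{\T^d}m_\rho\,dx=1$ and $m_\rho>0$. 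The plan is to establish bounds on $\lambda_\rho$, $v_\rho$, $m_\rho$ that are uniform in $\rho$, and then to pass to the limit $\rho\to0^+$ in both equations.

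The first bound is a maximum principle one: at a maximum point $x^*$ of $u_\rho$ on $\T^d$ we have $D_\Xc u_\rho(x^*)=0$ and $\mL u_\rho(x^*)=-\tfrac12\sum_j X_j(x^*)^T(\mathrm{Hess}\,u_\rho)(x^*)X_j(x^*)\ge0$, so the first equation of \eqref{sys-rho} gives $\rho u_\rho(x^*)\le\|V[m_\rho]\|_\infty+|H(x^*,0)|$; arguing symmetrically at a minimum point and using {\bf (II-L)} and {\bf (IV)}, one gets $\rho\|u_\rho\|_\infty\le C$ with $C$ independent of $\rho$, hence also $|\lambda_\rho|\le C$. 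The crucial step, which I expect to be the main obstacle, is a uniform gradient estimate: rewriting the first equation as $\mL u_\rho+H(x,D_\Xc u_\rho)=V[m_\rho]-\rho u_\rho$, whose right-hand side is bounded in $L^\infty(\T^d)$ uniformly in $\rho$, the quasilinear subelliptic gradient estimates underlying Theorem~\ref{Holder_regularity_semilinear} (those of \cite{XuZu97,SLLZ16}) yield $\|D_\Xc u_\rho\|_{L^\infty(\T^d)}\le C$, with $C$ depending on $\|V[m_\rho]-\rho u_\rho\|_\infty$, on the linear growth constant in {\bf (II-L)}, and on the structural data, but \emph{not} on $\|u_\rho\|_\infty$. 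It is precisely here that the linear growth {\bf (II-L)}, rather than the merely quadratic {\bf (II-Q)}, is needed: under quadratic growth the gradient estimate degenerates with $\|u_\rho\|_\infty$, which we do not control as $\rho\to0$ — hence the vanishing-discount limit is carried out only in the linear-growth regime.

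Granted $\|D_\Xc u_\rho\|_\infty\le C$, the oscillation of $u_\rho$ is controlled, since any two points of $\T^d$ are joined by a horizontal curve and therefore $\mathrm{osc}_{\T^d}u_\rho\le\|D_\Xc u_\rho\|_\infty\,\mathrm{diam}_{d_{CC}}(\T^d)$, with $\mathrm{diam}_{d_{CC}}(\T^d)<\infty$ by the H\"ormander condition \eqref{(I)}. Thus $\|v_\rho\|_\infty\le C$ and $\|\rho v_\rho\|_\infty\le C$ uniformly, so the right-hand side $V[m_\rho]-\lambda_\rho-\rho v_\rho$ of the $v_\rho$-equation is uniformly bounded; since $D_\Xc v_\rho$ is now uniformly bounded and $H$ is locally H\"older, $H(\cdot,D_\Xc v_\rho)$ is uniformly H\"older, so Theorem~\ref{Holder_regularity_semilinear} provides a uniform bound on $\|v_\rho\|_{C^{1,\alpha}_\Xc(\T^d)}$, and then viewing the $v_\rho$-equation as a linear one with H\"older right-hand side and applying the Schauder estimates of Theorem~\ref{Holder_regularity_linear} upgrades this to a uniform bound on $\|v_\rho\|_{C^{2,\alpha}_\Xc(\T^d)}$. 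For $m_\rho$: by {\bf (III)} and the bound just obtained, the drift $g(x,D_\Xc v_\rho)$ is uniformly bounded and uniformly H\"older, so Lemma~\ref{adj-prob} gives $0<\delta_0\le m_\rho\le\delta_1$ with $\delta_0,\delta_1$ independent of $\rho$, together with a uniform $H^1_\Xc(\T^d)$-bound and, by subelliptic linear regularity (Theorem~\ref{Holder_regularity_linear}), a uniform bound on $\|m_\rho\|_{C^{0,\gamma}_\Xc(\T^d)}$ for some $\gamma\in(0,1)$.

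It remains to extract a sequence $\rho\downarrow0$ along which $\lambda_\rho\to\lambda\in\R$, $v_\rho\to u$ in $C^2_\Xc(\T^d)$, $m_\rho\to m$ in $L^2(\T^d)$ (compact embedding $H^1_\Xc\hookrightarrow L^2$, Lemma~\ref{comp-emb}) and in $C(\T^d)$ (Arzel\`a--Ascoli via the uniform $C^{0,\gamma}_\Xc$-bound and the embedding $C^{0,\gamma}_\Xc\hookrightarrow C^{0,\gamma/k}$), with $D_\Xc m_\rho\rightharpoonup D_\Xc m$ in $L^2(\T^d)$; note $\rho v_\rho\to0$ uniformly since $\|v_\rho\|_\infty$ is bounded. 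The limit satisfies $\int_{\T^d}u\,dx=0$, $\int_{\T^d}m\,dx=1$ and $\delta_0\le m\le\delta_1$, so $m>0$ and $m\in C(\T^d)\cap\Ac$. Passing to the limit in the first equation is immediate from the $C^2_\Xc$-convergence, the continuity of $H$ (locally H\"older, with $D_\Xc v_\rho\to D_\Xc u$ uniformly and with values in a fixed compact set) and the continuity of $V$ on $\Ac$ (assumption {\bf (IV)}), giving $\mL u+\lambda+H(x,D_\Xc u)=V[m]$ in the classical sense; for the second equation one passes to the limit in the weak formulation \eqref{bil-for}, using $g(x,D_\Xc v_\rho)\to g(x,D_\Xc u)$ uniformly together with $m_\rho\to m$ in $L^2$ and $D_\Xc m_\rho\rightharpoonup D_\Xc m$ in $L^2$, obtaining $\mL^*m-\textrm{div}_{\mX^*}(m\,g(x,D_\Xc u))=0$ weakly. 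This yields a solution $(\lambda,u,m)\in\R\times C^2_\Xc(\T^d)\times C(\T^d)$ of \eqref{sys-erg}, as claimed.
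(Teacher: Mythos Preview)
Your overall architecture is the same as the paper's---take $(u_\rho,m_\rho)$ from Theorem~\ref{dsqg}, normalise, extract uniform bounds, pass to the limit---but the key step where you diverge from the paper is precisely the one you flag as ``the main obstacle'', and it is a genuine gap.

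You assert that the quasilinear estimates behind Theorem~\ref{Holder_regularity_semilinear} (i.e.\ \cite{XuZu97,SLLZ16}) give $\|D_\Xc u_\rho\|_\infty\le C$ with $C$ \emph{independent of} $\|u_\rho\|_\infty$, thanks to the linear growth {\bf (II-L)}. But those references do not furnish such an estimate: the $C^{1,\alpha}_\Xc$ bound of Theorem~\ref{Holder_regularity_semilinear} is stated for quadratic growth, and the constant there does depend on the sup-norm of the solution (the paper itself uses it in this form in the proof of Theorem~\ref{erg-thm-lin}, explicitly noting the dependence on $\|w_\rho\|_\infty$). If a sup-norm--free gradient bound were available from these results, your argument would go through verbatim under {\bf (II-Q)} as well, which contradicts your own observation that linear growth is essential at this step. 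In short, you have identified the right bottleneck but not supplied the missing ingredient.

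The paper handles this differently. Rather than a direct gradient bound, it proves the oscillation estimate $\|u_\rho-\langle u_\rho\rangle\|_\infty\le C$ \emph{first}, by a blow-up/contradiction argument in the spirit of Arisawa--Lions~\cite{AL98}: assuming the oscillation is unbounded along a sequence $\rho_n\to0$, one rescales $\psi_n:=\e_n(u_{\rho_n}-\langle u_{\rho_n}\rangle)$ with $\e_n:=\|u_{\rho_n}-\langle u_{\rho_n}\rangle\|_\infty^{-1}\to0$; the linear growth {\bf (II-L)} ensures $\e_nH(x,\xi/\e_n)$ stays under control, so the $\psi_n$ are equi-H\"older by \cite[Theorem~17]{Xu90-var}, and any uniform limit $\psi$ is simultaneously a viscosity subsolution of $\mL\psi-C|D_\Xc\psi|=0$ and a supersolution of $\mL\psi+C|D_\Xc\psi|=0$; the strong maximum principle then forces $\psi$ to be constant, contradicting $\|\psi\|_\infty=1$ and $\psi(\bar x)=0$. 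Only \emph{after} this oscillation bound does the paper invoke Theorem~\ref{Holder_regularity_semilinear} (now legitimately, since $\|w_\rho\|_\infty$ is controlled) and then bootstrap to $C^{2,\alpha}_\Xc$ as you do. Your treatment of $m_\rho$ and the passage to the limit is otherwise fine, and in fact slightly more detailed than the paper's on the continuity of $m$.
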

\begin{proof}
For $\rho >0$ let $(u_\rho, m_\rho ) \in C^2_\mX(\T^d)\times \big( H_\mX^1(\T^d)\cap L^\infty(\T^d) \big)$ be a solution of \eqref{sys-rho} by the existence result given in Theorem~\ref{dsqg}. 
Looking at the minima and maxima of $u_\r$, we have
\begin{equation}\label{bound}
 \|\r u_{\r} \|_{\infty} \le \sup_{m\in  H_\mX^1(\T^d)}\| H(\cdot, 0)- V[m]\|_{\infty} \,.
 \end{equation}
Let $<u_\r>:= \int_{\T^d} u_\r \,dx $ be the average of $u_\r$,
the key estimate is given in the following claim:
there exist $\rho_0>0$ and $C>0$ (independent of $\rho $) such that
\begin{equation}\label{stima-r}
\|u_{\r} - <u_{\r}>\|_{\infty} \le C, \quad \forall\; 0<\rho<\rho_0.
\end{equation}
To prove \eqref{stima-r} we adapt some ideas from \cite{AL98}. 
Assume by contradiction that there is a sequence $\r_n \to 0$ such that $\|u_{\r_n} - <u_{\r_n}>\|_{\infty}\to +\infty$ 
or equivalently, such that the sequence $$\e_n:=\|u_{\r_n} - <u_{\r_n}>\|_{\infty}^{-1}\to 0.$$ 
 Then the renormalised functions $\psi_n:=\e_n(u_{\r_n} - <u_{\r_n}>)$ satisfy
\begin{equation}
\label{sol_Pn}
\mL {\psi_n}+\e_n H\left(x, \frac{D_\Xc\psi_n}{\e_n}\right)+\r_n\psi_n=  \e_n(V[m_{\r_n}]  - \r_n<u_{\r_n}>) .
\end{equation}
We now apply \cite[Theorem~17]{Xu90-var} 
to deduce that the sequence $\{\psi_n\}$ is equi-H\"older continuous. In fact $\psi_n$ solve quasilinear equations of the same form as in \cite{Xu90-var} with
$A_i(x,u,\xi)=\xi_i$ and
$ B(x,u,\xi)=\varepsilon_nH\left(x,\frac{\xi}{\varepsilon_n}\right)
-\rho_n u-\varepsilon_n\big(V[m_{\rho_n}]\big)-\rho_n  <u>;
$
then it is easy to check that all conditions on the equation are satisfied just taking $g=0$, $f=1$ and a  $\Lambda$ depending only on the bound for $V[\cdot]$, the constant in   {\bf (II-L)} and the Lebesgue measure of $\T^d$.
Thus  \cite[Theorem~17]{Xu90-var} tells us that, taking $\r_n\leq 1$ and $\varepsilon_n\leq 1$, the H\"older norms of the solutions $\psi_{\r_n}$ are equi-bounded independently on $n$, which implies that $\psi_{\r_n}$ are equi-H\"older.
Therefore (up to a subsequence) we get that $\psi_n$ converges uniformly to a function $\psi$. 
Note that the functions $\psi_n$ are all renormalised, then $\|\psi\|_{\infty}=1$.
Moreover, since $\int_{\T^d} \psi_n \,dx=0$ by definition, then there exists a point $x_n\in \T^d$ such that   $\psi_n(x_n)=0$. Thus (up to a  further subsequence) we get $\psi(\ol x)=0$ for some  $\ol x\in \T^d$.
By using assumption  {\bf (II-L)} into equation
\eqref{sol_Pn}, one finds out that $\psi_{\r_n}$ are  classical (and hence viscosity) subsolutions of
\begin{equation}
\label{subsol_Pn}
\mL {\psi_n}-C \,|D_\Xc\psi_n|+\r_n\psi_n-  \e_n(V[m_{\rho_n}] - \r_n<u_{\r_n}>+C) =0.
\end{equation}
and  classical (and hence viscosity)  supersolutions of 
\begin{equation}
\label{supersol_Pn}
\mL {\psi_n}+C \,|D_\Xc\psi_n|+\r_n\psi_n-  \e_n(V[m_{\rho_n}] - \r_n<u_{\r_n}>-C) =0.
\end{equation}
 Finally by taking $n\to \infty$ in  \eqref{subsol_Pn}  and \eqref{supersol_Pn} and  by using the stability for viscosity subsolutions and viscosity supersolutions under uniform convergence (see e.g. \cite{BCD08}), 
$\psi$ is a viscosity subsolution of 
$
\mL {\psi} -C|{D_{\Xc}\psi}| = 0
$
and a viscosity supersolution of 
$
\mL {\psi} +C|{D_{\Xc}\psi}| = 0 
$.
Since $\mL$ is the subelliptic Laplacian associated to smooth H\"ormander vector fields and $\psi$ is periodic, we deduce from the strong maximum principle (see \cite{Bo69}, \cite{BL03}) that $\psi$ must be a constant, which  contradicts  $\|\psi\|_{\infty}=1$ and $\psi(\ol x)=0$,
proving thus \eqref{stima-r}.\par
We complete the proof of the theorem by showing that there exists a sequence $\r_n \to 0$ such that, for $w_\r:=u_{\r
} - <u_{\r}>$,
\begin{equation}
\label{conv-r}
\left(\r_n <u_{\r_n}>, \,w_{\r_n}, \,  m_{\r_n} \right) \to (\l, \, u, \, m) \qquad \mbox{in } 
 \R \times C_{\mX}^2(\T^d) \times  H_{\mX}^1(\T^d),    
\end{equation}
where $(\l, \, u, \, m)$ is a solution of~\eqref{sys-erg};
the convergence $m_{\rho_n} \to m$ is in the weak topology of $H_{\mX}^1(\T^d)$.
Indeed, we note that $(w_{\r}, m_{\r})$ solves 
\begin{equation}
\label{LLsys:r}
\left\{\begin{aligned}
&\mL {w_{\r}}+\r w_\r+H(x, D_\Xc {w_{\r}})=  V[m_{\r} ]  - \r <u_{\r}>  
\quad\mbox{in}\; \T^d,
\\ 
& {\mL}^*{m_{\r}}-\textrm{div}_{\Xc^*}\left(g(x, D_\Xc{w_{\r}} ){m_{\r}}\right)=0,
\\ 
&\int_{\T^d} m_\r(x) dx = 1,\quad m_\r>0 \,.
\end{aligned} \right. 
\end{equation}
By the a-priori  H\"older estimates for quasilinear subelliptic equations 
recalled in Theorem \ref{Holder_regularity_semilinear}  we know that
\[
\| {w_{\r}}\|_{C^{1, \alpha}_\Xc(\T^d)} \le C,
\]
for some $\alpha \in (0,1)$ and $C>0$  depending only on an upper bound of
 $\| {w_{\r}}\|_{\infty}$  and on the data of the problem, in particular on
the supremum norm of 
$V[m_{\r}]  - \r<u_{\r}>\,,$  
 which is bounded uniformly in $\r$ by  {\bf (IV)} 
and \eqref{bound}. In other words, $\alpha$ and $C$ can be   chosen independent of $\r$. 
Next by Schauder local estimates for subelliptic linear equations \cite[Theorem 3.5]{XuZu97}, we have
\begin{equation}
\label{est-w}
\|{w_{\rho}}\|_{C_\Xc^{2,\alpha }(\T^d
)} \le C,
\end{equation}
for some $C$ and $\a$ independent of 
  $\rho$.
On the other hand, by Lemma~\ref{adj-prob} 
and assumption~{\bf (III)} 
\begin{equation}
\label{est-m}
\|m_{\r }\|_{H_\Xc^{1}(\T^d)} \le C,
\end{equation}
for $C\ge 0$ independent of small enough $\r$.
Since $C_\Xc^{2, \alpha}(\T^d)$ 
is compactly embedded into $C_\Xc^2(\T^d)$,   the previous estimates  \eqref{est-w}, \eqref{est-m}
and the fact that the set $\{\r <u_\r> \; : \; \r  >0 \} $ is bounded (in $\R$) by \eqref{bound}, 
we can extract a sequence $\r_n \to 0$ 
such that  ~\eqref{conv-r}  holds.
Furthermore, since $g$ is locally H\"older by assumption {\bf (III)}
	and $D_\Xc w_{\rho_n } \to D_\Xc u$ in  $C_{\mX}^1(\T^d)$, then 
		$g_{n} \colonequals g(\cdot, D_\Xc w_{\rho_n }(\cdot ) ) \to 
		g(\cdot, D_\mX u(\cdot ))$. 
		Let $\langle \cdot, \cdot \rangle_n$
	denote the bilinear form associated with $g_n$ in the same fashion as 
	$\langle\cdot, \cdot\rangle$ denotes the bilinear form associated with $g$ after the  statement of Lemma~\ref{adj-prob}.
	Since  $m_{\rho_n}$ is the solution of the second equation in \eqref{LLsys:r},
	$\langle m_{\rho_n}, \vf\rangle_n^*=0$ for all $\vf \in H_\mX^1(\T^d)$. 
	From this and the fact that $g_n \to \bar g(\cdot,  D_\mX u (\cdot ))$ in $L^2(\T^d)$, it is fairly easy to deduce that
	$\langle m , \vf \rangle^* =0$ for all $\vf \in H^1_\mX(\T^d)$. Thus $m$
	is a solution of the  second equation in~\eqref{sys-erg}. The normalising conditions in the third row of 
	\eqref{sys-erg} are clearly preserved in the  limit. 
Thus the triplet $(\lambda, u, m)$ is indeed a solution of \eqref{sys-erg}.
	\end{proof}




Exactly as in the elliptic case, both the previous MFG systems have  unique solutions under suitable monotonicity assumptions.\par
 Recall that an 
 operator $V$, defined on some subset of $L^2(\T^d)$ with values in 
$L^2(\T^d)$, 
 is  {\em monotone}
if 
$
 \int_{\T^d} \left(V[m_1]-V[m_2] \right) (m_1-m_2)\, dx \ge 0$, $ \forall m_1, m_2,
$
and it is {\em strictly monotone} if the   inequality  is strict 
for all $m_1 \neq m_2$. 
Given a function $H \colon \T^d \to \R$ and a vector-valued map $g\colon  \T^d \to \R^m$, we say
that $H$ is {\em $g$-convex} if 
$
H( q_2) - H(q_1) - g(q_1) \cdot (q_2-q_1) \le 0, 
$
for all $q_1, q_2 \in \T^d$. 
If the   inequality  is strict for $q_1\neq q_2$, $H$ is {\em strictly $g$-convex}.  
\begin{thm}[Uniqueness]\label{uniq-thm}        
Assume that  one of the two following assumptions holds:
\begin{description}
\item[(i)]
$V$ is monotone in $L^2$ and $ H$ is  strictly $(-g)$-convex, or
\item[(ii)]
 $V$ is   strictly monotone in $L^2$
 and  $H$ is  $(-g)$-convex.
\end{description}
Then  the system  \eqref{sys-erg} has a unique weak solution.
\end{thm}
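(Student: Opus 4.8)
We only need to prove uniqueness, existence being Theorem~\ref{erg-thm-lin}. The plan is to run the sub-Riemannian version of the Lasry--Lions duality computation. Let $(\l_1,u_1,m_1)$ and $(\l_2,u_2,m_2)$ be two weak solutions of \eqref{sys-erg}; by construction the $u_i\in C^2_\Xc(\T^d)$ are classical solutions of the Hamilton--Jacobi equations, while each $m_i\in H^1_\Xc(\T^d)$ is the weak solution furnished by Lemma~\ref{adj-prob} for the H\"older drift $g(\cdot,D_\Xc u_i(\cdot))$, so that $0<\delta_0\le m_i\le\delta_1$ on $\T^d$. Write $p_i:=D_\Xc u_i$. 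Subtracting the two Hamilton--Jacobi equations gives
\[
\mL(u_1-u_2)+(\l_1-\l_2)+H(x,p_1)-H(x,p_2)=V[m_1]-V[m_2].
\]

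First I would multiply this identity by $m_1$ and integrate over $\T^d$: since $\int_{\T^d}m_1\,dx=1$ the constant $\l_1-\l_2$ survives additively, and testing the weak Fokker--Planck equation for $m_1$ against $v:=u_1-u_2\in C^2_\Xc(\T^d)\subset H^1_\Xc(\T^d)$ --- which, after unwinding the form \eqref{bil-for} by the adjoint relation $\int_{\T^d}(X_j f)h\,dx=\int_{\T^d}f\,X_j^*h\,dx$, reads $\int_{\T^d}m_1\big(\mL v-g(x,p_1)\cdot D_\Xc v\big)\,dx=0$ --- lets me replace $\int_{\T^d}m_1\,\mL(u_1-u_2)\,dx$ by $\int_{\T^d}m_1\,g(x,p_1)\cdot(p_1-p_2)\,dx$. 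Doing the same with $m_2$ and subtracting the two scalar identities (the $\l$-terms cancel) leaves
\begin{multline*}
\int_{\T^d} m_1\big[\,g(x,p_1)\cdot(p_1-p_2)+H(x,p_1)-H(x,p_2)\,\big]\,dx
-\int_{\T^d} m_2\big[\,g(x,p_2)\cdot(p_1-p_2)+H(x,p_1)-H(x,p_2)\,\big]\,dx\\
=\int_{\T^d}\big(V[m_1]-V[m_2]\big)(m_1-m_2)\,dx.
\end{multline*}
By the $(-g)$-convexity of $H$ the first bracketed integrand is $\le0$ and the second is $\ge0$ pointwise, so, since $m_i\ge\delta_0>0$, the left-hand side is $\le0$; by monotonicity of $V$ the right-hand side is $\ge0$. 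Hence both sides vanish, and in fact both bracketed integrands vanish a.e.\ on $\T^d$ and $\int_{\T^d}(V[m_1]-V[m_2])(m_1-m_2)\,dx=0$.

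It remains to upgrade this rigidity to $(\l_1,u_1,m_1)=(\l_2,u_2,m_2)$. In case (i), strict $(-g)$-convexity together with the vanishing of the first integrand forces $p_1=p_2$, i.e.\ $D_\Xc(u_1-u_2)\equiv0$ on $\T^d$; since $u_1-u_2\in C^2_\Xc(\T^d)$ and, by the H\"ormander condition \eqref{(I)} and the Chow--Rashevskii connectivity theorem (see \cite{montgomery}), any two points of $\T^d$ are joined by a horizontal curve, $u_1-u_2$ must be constant, hence $\equiv0$ by $\int_{\T^d}(u_1-u_2)\,dx=0$; then $p_1=p_2$ and Lemma~\ref{adj-prob} give $m_1=m_2$, and subtracting the Hamilton--Jacobi equations gives $\l_1=\l_2$. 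In case (ii), strict monotonicity of $V$ together with $\int_{\T^d}(V[m_1]-V[m_2])(m_1-m_2)\,dx=0$ forces $m_1=m_2=:m$; a comparison at a maximum point $x_0$ of $w:=u_1-u_2$, where $D_\Xc w(x_0)=0$ (so $H(x_0,p_1(x_0))=H(x_0,p_2(x_0))$) and $\mL w(x_0)\ge0$, yields $\l_2-\l_1=\mL w(x_0)\ge0$, while the same argument applied to $u_2-u_1$ gives $\l_2\le\l_1$, whence $\l_1=\l_2$; using the vanishing of the second bracketed integrand to write $H(x,p_1)-H(x,p_2)=-g(x,p_2)\cdot D_\Xc w$ a.e., the function $w$ solves the linear homogeneous hypoelliptic equation $\mL w-g(x,p_2)\cdot D_\Xc w=0$ on $\T^d$, and Bony's strong maximum principle \cite{Bo69,BL03} then forces $w$ constant, hence $\equiv0$ by $\int_{\T^d}w\,dx=0$. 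In either case $(\l_1,u_1,m_1)=(\l_2,u_2,m_2)$.

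The genuinely delicate point is not the algebra but the rigour of the integration by parts at the regularity actually available --- $m_i$ is only an $H^1_\Xc$ weak solution and $u_i$ only $C^2_\Xc$; this is exactly why one works with the weak form of the Fokker--Planck equation as a pairing with $H^1_\Xc(\T^d)$ test functions, taking $u_1-u_2$ itself as test function, after checking that for $v\in C^2_\Xc(\T^d)$ and $\mu\in H^1_\Xc(\T^d)$ the form \eqref{bil-for} equals $\int_{\T^d}(\mL v-g\cdot D_\Xc v)\,\mu\,dx$. A secondary subtlety, peculiar to the subelliptic setting, is that the classical implication ``vanishing gradient $\Rightarrow$ constant'' must be replaced by its horizontal analogue, via connectivity by horizontal curves rather than ordinary path-connectivity.
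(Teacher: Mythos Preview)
Your argument is correct and is precisely the standard Lasry--Lions duality computation that the paper has in mind when it writes ``The proof is standard so we omit it.'' The sub-Riemannian adaptations you highlight---using the weak form of the Fokker--Planck equation with $u_1-u_2\in C^2_\Xc\subset H^1_\Xc$ as test function, and invoking Chow--Rashevskii connectivity (or equivalently Bony's strong maximum principle) to pass from $D_\Xc w\equiv0$ to $w$ constant---are exactly the right replacements for the elliptic ingredients.

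One small remark on signs: the paper's displayed definition of $g$-convexity has the inequality written as $\le 0$, but this is evidently a typo (the example $H(x,q)=|q|$, $g(x,q)=-q/|q|$ in the subsequent remark satisfies the opposite inequality, and Hamiltonians from optimal control are convex, not concave, in $q$). Your computation uses the intended orientation $H(q_2)-H(q_1)+g(q_1)\cdot(q_2-q_1)\ge 0$, under which your sign claims on the two bracketed integrands are correct and the argument closes.
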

The proof is standard so we omit it.
\begin{rem}$\,$
\begin{enumerate}
\item Hamiltonians $H$ coming from optimal control are ``$(-g)$-convex'' and, under suitable assumptions, strictly $(-g)$-convex. 
\item The strict $(-g)$-convexity can be relaxed  requiring that
$
H( q_2) - H(q_1) - g(q_1) \cdot (q_2-q_1) \le 0, 
$
 implies $g(x, q_1)= g(x, q_2)$, instead of $q_1=q_2$. In this way one can cover also the case
$H(x, q)= |q|$ and $g(x, q) = -q/|q|$ for $q\neq0$, $g(x, 0)=0$. 
\item Similarly one can state  the uniqueness for the ``discounted'' system~\eqref{sys-rho}. 
\end{enumerate}

\end{rem}

	


\section{Appendix}
Here we want to show  briefly  some applications 
 to stochastic differential games, which motivate the study of our MFG system.
 These applications are  standard and the H\"ormander  degenerate case is   similar  to the known uniformly elliptic case. We include them for completeness but  omitting all details and proofs.

\subsection{The optimal-control-fixed-point problem of MFG theory}

The heuristics of Mean Field Games leads to a mathematical problem that 
consists of an optimal control problem followed by a fixed point problem. 
This heuristics  is well explained in the literature, for example, in~\cite{LL-cs}, \cite{LL-jj}, 
\cite{Card-notes} and~\cite{ABC}. 
 Moreover, the relation between $N$-player games and Mean Field Games has been considered in the literature since the very beginnings, \cite{LL-cs,LL-hf,LL-jj}, \cite{HCM06, HCM07}; see also 
\cite{Fe13} where results have been extended to several homogeneous populations of agents
for ergodic problems.
Recently the  time-dependent case has been addressed by Cardaliaguet, Delarue, Lasry and Lions in  \cite{clld}. 
For the first mathematical problem hinted above,
we give a short self-contained description based mainly on 
 the notes of Cardaliaguet~\cite{Card-notes}
 and some comments in the introduction of Araposthatis et al.~\cite{ABC}.\par
As in the previous section, we consider a family of $m$ smooth vector fields $\Xc= \left\{ X_1, \ldots, X_m \right\}$, $m\in\N$, satisfying the H\"ormander condition \eqref{(I)} on the
$d$-dimensional  torus $\T^d$,  
and a map $V\colon P(\T^d) \to P(\T^d)$ that satisfies condition {\bf (IV)}.
Let $W_t=(W_t^1, \ldots, W_t^m)$ be an $\R^m$-valued Brownian motion in a complete filtered probability space
$
\big(\Omega, \mF, (\mF)_{t\in \R_+}, \mathbb{P}  \big).
$
The Brownian motion is assumed adapted with respect to the filter $(\mF)_{t\in \R_+}$
and the filter is required to satisfy 
the so-called {\em standard assumptions}, see e.g.~\cite{Be88}, \cite{Pr05}.  \par
We consider the stochastic differential equation 
 \bel{dyn}
\left\{\begin{aligned}
d\xi_t &= \sum_{k=1}^m b^k(\xi_t, \alpha_t  )  X_k(\xi_t)   dt +   \sum_{k=1}^m  X_k(\xi_t) \circ  dW^k_t\,, \\
\xi_0 &= x_0\,,
\end{aligned}\right.
\eeq
where the notation ``$\circ$'' denotes Stratonovich integration and $x_0\in\T^d$ is some fixed {\em initial condition}. 
The {\em  controls}  $\a= (\a^1, \ldots, \a^m)\colon [0,\infty) \times \Omega \to A$ 
are measurable $(F_t)_{t \ge 0 }$-adapted maps taking values in some metric space $A$, 
while $\mA$  denotes the set of admissible controls.  
We assume that the {\em drift}  
$
b\colon \T^d \times A \to \R^m \text{ is Lipschitz continuous, locally in } a \in A,
$
then the {\em cost functional} is given by
\bel{cost}
J(\a, m) =   \liminf_{T\to \infty} \frac1T \bE\left[ \int_0^T\big(L(\xi_t, \a_t) 
 +V[m](\xi_t) \big) \, dt \right],
\eeq
for all $\a \in \mA$, where $0\le t \mapsto x(t) \in \T^d$ is the solution to \eqref{dyn}
corresponding to the control $\alpha$. Under our assumptions this solution is uniquely determined by $\a$ and the initial 
condition $x_0\in \T^d$. We will omit to write  explicitly the dependence  on $x_0$ in the functional $J$ since 
the {\em optimal value} is indeed independent of $x_0$.
We assume that  the Lagrangian 
$
L \colon \T^d \times \R^m \to \R $
is measurable and locally bounded and that
$
V \colon P(\T^d) \to L^\infty(\T^d)
$ is measurable.
The standard  MFG theory leads to  the following mathematical problem:\par
 $(\bf P)$:
 Find a pair $(m,   \hat \alpha) \in P(\T^d) \times \mA$ such that 
\begin{enumerate}
\item $\hat\a=\hat\a(m)$ minimizes $J(\cdot, m)$ among $\a \in \mA$, 
\item $m$ is the ergodic measure of the {\em optimal dynamic} $\hat x(\cdot)$ corresponding to the {\em optimal control} $\hat \a$, i.e.
the solution to \eqref{dyn} for $\a=\hat\a$ with   initial state $x_0$.  
\end{enumerate}
The  Hamiltonian has the standard structure
\begin{equation}
 \label{H-def}
H(x, q ) = \sup_{a \in A} \big( -b(x, a) \cdot q -L(x, a) \big) \qquad \forall x \in \T^d\, ,\; q \in \R^m\,.
\end{equation}
and 
we assume that
there exists $ \alpha \colon \T^d \times \R^m \to A$ Lipschitz continuous, locally in  $q \in \R^m $
and such that
$\forall x\in\T^d, q\in \R^m$
the function$ A\ni a \mapsto  -b(x,a)  \cdot q -L(x, a) \in \R$
 attains a maximum at 
$\bar\alpha (x,q)$.
Finally  the auxiliary map 
$
g\colon \T^d \times \R^m \to \R^m  
$ is 
defined by 
$
g(x, q) =  b\big( x,  \bar \alpha (x, q)    \big), $ for all $x\in \T^d \,, q\in \R^m
$.
Note that by the above assumptions  $g$ is Lipschitz, locally in $q\in \R^m$. 


\begin{lem}[Verification theorem]\label{ver-P}
Under all previous assumptions, let
 $(\lambda, u, m) \in \R\times C_{\mX}^2(\T^d) \times P(\T^d)$ be a solution to \eqref{sys-erg}
and  $\hat \alpha$ be the admissible control corresponding to the {\em feedback control} $\bar \alpha\big(x, D_{\mX} u(x)  \big)$, 
that is $\hat \alpha_t = \bar \alpha\big(\hat \xi , D_{\mX} u(\hat\xi_t)  \big)$ for all $t\in [0,+\infty)$, where
$\hat \xi$ is the solution to \eqref{dyn}  for  $\alpha_t= \hat \alpha_t$ and for some $x_0\in \T^d$,
then the pair 
$
\big( \hat \alpha, m   \big)
$
is a solution to problem $(P)$. Moreover
$
\lambda= J(\hat \alpha, m).
$
\end{lem}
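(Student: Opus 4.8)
The plan is to prove this as a standard verification argument, using the PDE solved by $u$ to control the long-run average cost of an arbitrary admissible control, and then showing equality is attained by the feedback $\hat\alpha$. First I would apply the It\^o formula (in Stratonovich form, exploiting that the noise coefficients are exactly the vector fields $X_k$) to the process $t\mapsto u(\xi_t)$ along the dynamics \eqref{dyn} driven by an arbitrary admissible control $\alpha\in\mA$. Because the generator of \eqref{dyn} is $-\mL + \sum_k b^k(\cdot,\alpha)X_k = -\mL + g_\alpha\cdot D_\Xc$ in the notation of the paper, this yields
\[
\bE\big[u(\xi_T)\big] - u(x_0) = \bE\!\left[\int_0^T \big(-\mL u(\xi_t) + b(\xi_t,\alpha_t)\cdot D_\Xc u(\xi_t)\big)\,dt\right],
\]
the martingale part having zero expectation since $D_\Xc u$ is bounded ($u\in C^2_\Xc(\T^d)$, $\T^d$ compact). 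Using the HJB equation $\mL u + \lambda + H(x,D_\Xc u) = V[m]$ to substitute $-\mL u = \lambda + H(x,D_\Xc u) - V[m]$, and then the pointwise bound $H(x,q) \ge -b(x,a)\cdot q - L(x,a)$ valid for every $a\in A$ (immediate from the sup-definition \eqref{H-def}), I get
\[
\bE\big[u(\xi_T)\big] - u(x_0) \ge \bE\!\left[\int_0^T \big(\lambda - L(\xi_t,\alpha_t) - V[m](\xi_t)\big)\,dt\right].
\]

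Next I would rearrange this to
\[
\frac1T\,\bE\!\left[\int_0^T \big(L(\xi_t,\alpha_t) + V[m](\xi_t)\big)\,dt\right] \ge \lambda - \frac{\bE[u(\xi_T)] - u(x_0)}{T},
\]
and take $\liminf_{T\to\infty}$. Since $u$ is bounded on the compact torus, the last term vanishes, giving $J(\alpha,m) \ge \lambda$ for every $\alpha\in\mA$. Then I would repeat the computation with $\alpha_t = \hat\alpha_t = \bar\alpha(\hat\xi_t, D_\Xc u(\hat\xi_t))$: by the defining property of $\bar\alpha$ the inequality $H(x,q)\ge -b(x,\bar\alpha(x,q))\cdot q - L(x,\bar\alpha(x,q))$ becomes an equality, so the chain above holds with equality and $J(\hat\alpha, m) = \lambda$ (here I should note that the $\liminf$ defining $J$ is actually a genuine limit along $\hat\alpha$, again because $u$ is bounded). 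This establishes both that $\hat\alpha$ minimizes $J(\cdot,m)$ among admissible controls and that $\lambda = J(\hat\alpha,m)$.

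It remains to check the second requirement in problem $(P)$, namely that $m$ is the ergodic measure of the optimal dynamic $\hat\xi$. This is exactly where the second equation of \eqref{sys-erg} enters: with $g(x) := g(x,D_\Xc u(x)) = b(x,\bar\alpha(x,D_\Xc u(x)))$, the closed-loop process $\hat\xi$ is the diffusion with generator $-\mL + g\cdot D_\Xc$, and the Fokker--Planck equation $\mL^* m - \divergence_{\Xc^*}(m\,g(\cdot,D_\Xc u)) = 0$ together with $\int_{\T^d} m = 1$, $m>0$ says precisely that $m$ is the stationary (invariant) density of this diffusion --- this is the same identification of $m$ with the ergodic measure already carried out in the proof of Lemma~\ref{adj-prob} via the kernel representation and Theorem~\ref{erg-thm}. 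So $m$ is indeed the ergodic measure of $\hat\xi$, and $(\hat\alpha,m)$ solves $(P)$.

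I expect the only real subtlety --- which the paper anyway declares it will skip --- to be the justification of the It\^o/Stratonovich expansion and the vanishing of the stochastic integral's expectation: one needs $u\in C^2_\Xc$ to be enough regularity for the Stratonovich chain rule along \eqref{dyn}, and one needs integrability of the local martingale, both of which follow from smoothness of the $X_k$, compactness of $\T^d$, and the $C^2_\Xc$ bound on $u$ giving boundedness of $D_\Xc u$ and of the relevant second-order quantities. A minor point is that $\bar\alpha(x,D_\Xc u(x))$ is only Lipschitz locally in $q$, so one should observe that $D_\Xc u$ ranges in a compact set, making the feedback globally Lipschitz in $x$ and hence \eqref{dyn} well-posed for $\alpha=\hat\alpha$. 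None of these present a genuine obstacle in the degenerate H\"ormander setting beyond what is already standard in the uniformly elliptic case.
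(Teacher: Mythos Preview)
Your proof is correct and follows essentially the same approach as the paper: apply It\^o's formula together with the HJB equation in \eqref{sys-erg} to obtain $J(\alpha,m)\ge\lambda$ with equality at $\hat\alpha$, and then invoke Lemma~\ref{adj-prob} to identify $m$ as the unique invariant (ergodic) measure of the closed-loop diffusion $\hat\xi$. The paper's own proof is only a brief sketch of exactly these two steps, so your more detailed write-up fills it in faithfully.
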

\begin{proof}

The proof is trivial but we briefly sketch the main steps for sake of completeness. 
As consequence of It\^o formula and the first equation in
\eqref{sys-erg}, the first property in problem $(\bf P)$ is satisfied. 
Then every Markov process with a compact state space has an invariant measure (e.g. see~\cite[Theorem~9.3]{EK86}). In particular, the diffusion $\hat\xi$ 
has an invariant measure. 
This invariant (actually, ergodic) measure is a weak solution of the dual operator of the generator of $x \mapsto \hat\xi^x$, that means of the dual of $-\mL  + b\big(x, \bar \alpha(x, D_{\mX }u(x) ) \big) \cdot  D_{\mX}= -\mL -g(x, D_{\mX}u ) \cdot D_{\mX}$. In other words, this invariant measure is a solution of equation~\eqref{ad-pr-eq}. But by Lemma~\ref{adj-prob} that equation has a unique solution, so the invariant measure of $\hat\xi$ is precisely $m$.
Then also  condition 2  is satisfied. 
\end{proof}
Combining together Theorem~\ref{erg-thm-lin} and Lemma~\ref{ver-P} one can derive the following result.
\begin{cor}[Existence of solutions to the MFG problem]
Under the assumptions of  Lemma~\ref{ver-P}
and assuming in addition  that $L$ is locally H\"older continuous, then
for every $x_0 \in \T^d$ there exists a solution $(\hat \alpha, m)$ to problem $(\bf P)$. 
\end{cor}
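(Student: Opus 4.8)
The plan is to obtain the corollary by directly combining the existence theorem for the ergodic MFG system, Theorem~\ref{erg-thm-lin}, with the verification lemma, Lemma~\ref{ver-P}. The one substantive step is to check that the structural control-theoretic hypotheses imposed in Lemma~\ref{ver-P}, together with the extra requirement that $L$ be locally H\"older continuous, imply the abstract hypotheses {\bf (II-L)}, {\bf (III)}, {\bf (IV)} and the local H\"older continuity of $H$ under which Theorem~\ref{erg-thm-lin} is stated. Hypothesis {\bf (IV)} on $V$ is part of the standing assumptions of the appendix, so there is nothing to prove there. For {\bf (III)}: the auxiliary map $g(x,q)=b(x,\bar\alpha(x,q))$ is a composition of the drift $b$ and the maximiser $\bar\alpha$, both of which are Lipschitz continuous locally in the second variable by assumption, so $g$ is locally Lipschitz, in particular H\"older continuous on bounded sets. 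For $H$, written as in \eqref{H-def}, namely $H(x,q)=\sup_{a\in A}\big(-b(x,a)\cdot q-L(x,a)\big)$: the linear bound {\bf (II-L)} follows from the boundedness of $b$ on $\T^d\times A$ (which holds, e.g., when the control space $A$ is compact, or is taken as part of the standing assumptions) together with the local boundedness of $L$; and the local H\"older continuity of $H$ follows because, for $q$ ranging in a bounded set, the functions $x\mapsto -b(x,a)\cdot q-L(x,a)$ admit a common H\"older modulus of continuity (using the Lipschitz dependence of $b$ on $x$ and the local H\"older continuity of $L$), while $q\mapsto H(x,q)$ is convex and hence locally Lipschitz, and a supremum of functions sharing a modulus of continuity inherits that modulus.

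Granting this, Theorem~\ref{erg-thm-lin} provides a triple $(\lambda,u,m)\in\R\times C_{\mX}^2(\T^d)\times C(\T^d)$ solving \eqref{sys-erg}, with $m>0$ and $\int_{\T^d}m\,dx=1$; identifying $m$ with the probability measure $m(x)\,dx$, we have $m\in P(\T^d)$, so $(\lambda,u,m)\in\R\times C_{\mX}^2(\T^d)\times P(\T^d)$, exactly the class of solutions required by Lemma~\ref{ver-P}. Now fix $x_0\in\T^d$ and let $\hat\alpha$ be the admissible control associated with the feedback $\bar\alpha(x,D_{\mX}u(x))$, i.e. $\hat\alpha_t=\bar\alpha(\hat\xi_t,D_{\mX}u(\hat\xi_t))$ where $\hat\xi$ solves \eqref{dyn} with this feedback and $\hat\xi_0=x_0$. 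This is well posed: since $u\in C_{\mX}^2(\T^d)$ the map $x\mapsto b(x,\bar\alpha(x,D_{\mX}u(x)))=g(x,D_{\mX}u(x))$ is continuous (it inherits the continuity of $x\mapsto D_{\mX}u(x)$), so \eqref{dyn} admits a solution defining a Markov diffusion on the compact state space $\T^d$, which is all that is used in the proof of Lemma~\ref{ver-P}. Applying Lemma~\ref{ver-P} to $(\lambda,u,m)$ then yields that $(\hat\alpha,m)$ solves problem $(\bf P)$ (and, incidentally, $\lambda=J(\hat\alpha,m)$), which is the assertion of the corollary.

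The only genuinely non-routine point is the verification in the first paragraph; everything afterwards is a direct invocation of results already established. Within that verification, the linear-growth condition {\bf (II-L)} is the delicate one, since it rests on the boundedness of $b$ over $\T^d\times A$: this is automatic when $A$ is compact but must otherwise be read into the standing assumptions on the drift. The H\"older regularity of $H$ and $g$, by contrast, is a soft consequence of the Lipschitz/H\"older dependence of $b$, $\bar\alpha$ and $L$ on their arguments.
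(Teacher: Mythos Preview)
Your proposal is correct and follows exactly the route the paper itself takes: the paper merely states that the corollary follows by ``combining together Theorem~\ref{erg-thm-lin} and Lemma~\ref{ver-P}'' and gives no further argument. In fact, you supply more than the paper does, namely the verification that the control-theoretic standing assumptions of the Appendix imply the abstract hypotheses {\bf (II-L)}, {\bf (III)}, {\bf (IV)} and the local H\"older continuity of $H$; your observation that {\bf (II-L)} relies on boundedness of $b$ (e.g.\ via compactness of $A$) is a point the paper leaves implicit.
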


\subsection{Nash equilibria for a class of $N$-player games}


The solvability 
theory for systems similar to \eqref{sys-erg} can be applied to build Nash equilibria in feedback form for a class of
stochastic differential $N$-player games.   This is a straightforward adaptation to the case of H\"ormander diffusions of the results contained in 
\cite{BF16}. We include the main ideas for completeness. 
Let $N\in \N$, $W_t^i$ be a $\R^{m_i}$-valued Brownian motion, for every $i=1, \ldots, N$ and
for some $m_i \in \N$, adapted to the
filtered probability space $(\Omega, \mF, (\mF)_{t\in \R_+}, \bP)$
and assume that $W_t^1, \ldots, W_t^N$ are independent, then
 the dynamic of the game is described by the system of SDEs 
\bel{dyn-g}
\left\{
\begin{aligned}
d \xi^i_t &=  \sum_{k=1}^{m_i} b_i^k(\xi^i_t,  \a^i_t )  X^i_k(\xi^i_t) dt + \sum_{k=1}^{m_i}X^i_k(\xi^i_t) \circ dW_t^i   
  \quad  \text{in } \T^{d_i} \\
\xi^i_0&= x_0^i 
\end{aligned} \right. \, , \qquad i= 1, \ldots, N,
\eeq
where 
$\xi^i$ is the state of the $i$-th player, 
$x_0^i$ are given initial conditions,  
 $A_i$ is a given metric space
 and 
 the {\em set of control parameters of player $i$} and each admissible control (namely also strategy) of player $i$,
$\a^i\colon: \R_{+} \times \Omega \to A_i  $, is a measurable and locally bounded map
 adapted to $W_t^i$.
Let $\mA_i$ denotes the set of all
admissible controls for player $i$, assume that
$
\mX_i= \{X^i_1, \ldots, X^i_{m_i}\}
$
 is a set of smooth H\"ormander vector fields
on the flat torus $ \T^{d_i} $ for some  $d_i \in \N $, 
and the drift
$
b_i\colon \T^{d_i} \times A_i \to \R^{m_i}$  is a locally Lipschitz map. 
Under these assumptions, it is known that for any $N$-tuple of initial conditions $(x_0^1, \ldots, x_0^N)$
and for every $N$-tuple of admissible controls $(\alpha^1, \ldots, \alpha^N) \in \mA_1 \times \cdots \times \mA_N$ there exists 
a unique solution $\xi = (\xi^1, \ldots, \xi^N)$ to  \eqref{dyn-g}. Actually, the system \eqref{dyn-g} is ``decoupled'' in
the sense that each SDE for $\xi^i$ 
is solved independently of all the other equations 
and  the stochastic processes $\xi^1, \ldots, \xi^N$ are independent of each other: each $\xi^i$ is adapted to its ``own'' Brownian motion
$W_t^i$. 
The {\em cost} (or {\em performance criterion}) of player  $i$ 
is given by 
\bel{cost-i}
J_i(\a^1, \ldots, \a^N) = \lim_{T\to \infty} \frac1T \bE \left[\int_0^T \big( L_i(\xi^i_t, \a^i_t) + F_i(\xi^1_t, \ldots, \xi^N_t)   \big)\, dt  \right] \,,
\eeq
where $
L_i\colon \T^{d_i}\times A_i \to \R $
and
$
F_i: \T^{d_1} \times \cdots \times \T^{d_N} \to \R$
are H\"older continuous.
Each player seeks to optimise its performance criterion (minimising the cost) in presence of all the competitors. 
Clearly, the agents have conflicting goals: a {\em win-win} set of strategies that satisfies all players,  i.e  minimises all their costs simultaneously, in general will not exist. In these types of problems, a good notion of solution turns out to be the notion of Nash equilibrium:
 a set of admissible strategies $(\hat \alpha^1, \ldots, \hat \alpha^N) \in \mA_1\times\cdots \times \mA_N$ 
 is called a {\em Nash equilibrium}
 if, for every $i=1, \ldots N$, 
$
J_i( \hat \alpha^1, \ldots, \hat \alpha^N )  \le J_i(\hat \alpha^1, \ldots,  \hat\alpha^{i-1}, \alpha^i , \hat\alpha^{i+1} \ldots \hat \alpha^N)
$.
In other words, the player $i$ cannot ``perform better'' by moving away from $\hat \alpha^i$ if the opponents continue to stick to 
$(\hat \alpha^1, \ldots, \hat \alpha^N )$.
The problem of finding Nash equilibria  
 reduces  to finding solutions to a system of $2N$ PDEs, made by $N$ equations of HJB type coupled with $N$ equations 
of KFP type:
\bel{sys-N}
\left\{\begin{aligned}
& \mL_i u_i + \l_i + H_i(x, D_{\mX_i} u_i ) = V^i[m_1, \ldots, m_N] \quad  \text{in } \T^{d_i},  \\ 
& \mL_i^*m_i + \textrm{div}_{\mX_i^*} \big( m_i g_i\big( x_i, D_{\mX_i}u_i  \big ) \big) =0 \quad \text{in } \T^{d_i} ,\\
& \int_{\T^{d_i}}u_i \, dx^i=0 \, , \quad \int_{\T^{d_i}}m_i \, dx^i=0 , \quad m_i >0,
 \end{aligned}\right. \quad i=1, \ldots, N \,,
\eeq
where $\lambda_i\in \R$, $H_i(x, q ) = \sup_{a \in A_i} \big( -b_i(x, a) \cdot q - L_i(x, q) \big)$, $\mL_i = - \frac12 \sum_{k=1}^{m_i} (X^i_k)^2 $ (with dual operator $\mL_i ^*$), the auxiliary maps
$
g_i\colon \T^{d_i} \times \R^{m_i} \to \R^{m_i} 
$ are defined as 
$ g_i(x,q) = b_i \big(x, \bar \alpha(x, q) \big)$ $\forall
x \in \T^{d_i}, \; q \in \R^{m_i}$
and the operators 
$
V_i\colon \prod_{\substack{ 1\le j\le N \\ j\neq i}}   P(\T^{d_i})  \to L^\infty(\T^{d_i})
$
are defined by 
\bel{Vi-def}
V_i[m_1, \ldots, m_{i-1}, m_{i+1}, \ldots, m_N ](x) = \int_{ \prod_{\substack{ j=1 \\ j\neq i}}^N  \T^{d_i}    } F(x_1, \ldots, x_N) \prod_{\substack{ 1\le j \le N \\ j \neq i  } } \, m_j(dx^j)\,. 
\eeq

\begin{thm}[PDEs and Nash equilibria]\label{Nash}
Assume 
that
there exist maps 
$
\bar \alpha^i \colon \T^{d_i} \times \R^{m_i}    \to A_i,$
Lipschitz continuous,
such that 
$
\forall x\in \T^{d_i},  q \in \R^{m_i}$ $\bar\alpha^i(x,q) $ is a maximum point of 
$A_i \ni a \mapsto  -b_i(x, a) \cdot q - L_i(x, \alpha) \in \R$,
and  the Hamiltonians $H_i$
grow at most linearly in the gradient variable, uniformly w.r.t. $x$,
then 
 \begin{enumerate}[$(i)$]
\item
There exists a solution $\l_i\in \R$, $u_i \in C^2_{\mX_i}(\T^d)$, $m_i \in H^1_{\mX_i}(\T^{d_i}) \cap L^\infty(\T^{d_i})$, $i=1, \ldots, N$
to system \eqref{sys-N}. 
\item Every solution of \eqref{sys-N} determines a Nash equilibrium in {\em feedback form}
by 
$
\hat \a^i(x) = \bar\alpha^i\big(x, D_{\mX_i}u_i(x)   \big),
$
for the game described above. 
Moreover $\l_i = J_i(\hat \alpha^1, \ldots, \hat\alpha^N )$
and $m_i$ is the ergodic measure of  $\hat \xi^i$, where  $\hat \xi 
=(\hat \xi^1, \ldots, \xi^N)$ is the  optimal dynamic \eqref{dyn}
corresponding to the optimal control $\hat\a =(\hat \alpha^1, \ldots, \hat\alpha^N)$.
\end{enumerate}
\end{thm}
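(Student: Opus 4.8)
The plan is to derive both assertions from the single-population theory of Sections~3--4 and the verification Lemma~\ref{ver-P}, exploiting that the $N$ diffusions in \eqref{dyn-g} are genuinely decoupled and interact only through the operators $V^i$. First I would check that, for every $i$, the data of the $i$-th subsystem in \eqref{sys-N} meet the hypotheses of Theorems~\ref{dsqg}, \ref{erg-thm-lin} and Lemma~\ref{ver-P}. The Hamiltonian $H_i(x,q)=\sup_{a\in A_i}\big(-b_i(x,a)\cdot q-L_i(x,a)\big)$ is locally H\"older by the regularity of $b_i,L_i$ and grows at most linearly in $q$ by assumption, so it satisfies \textbf{(II-L)}; the map $g_i(x,q)=b_i(x,\bar\alpha^i(x,q))$ is H\"older (hence satisfies \textbf{(III)}) because $\bar\alpha^i$ is Lipschitz and $b_i$ is locally Lipschitz. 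The essential point is that each coupling operator $V^i$ defined by \eqref{Vi-def} satisfies \textbf{(IV)} on $\prod_{j\ne i}P(\T^{d_j})$: one has $\|V^i[\,\cdot\,]\|_{L^\infty(\T^{d_i})}\le\|F_i\|_\infty$, continuity in $(m_j)_{j\ne i}$ follows from the elementary bound $\|V^i[\cdots]-V^i[\cdots']\|_\infty\le\|F_i\|_\infty\sum_{j\ne i}\|m_j-m_j'\|_{L^1}$, and, since $F_i$ is jointly H\"older, integrating the variables $x^j$ ($j\ne i$) against probability measures leaves a function of $x^i$ whose Euclidean H\"older seminorm is bounded by $[F_i]_{C^{0,\alpha}}$ uniformly in $(m_j)_{j\ne i}$; by the inclusion $C^{0,\alpha}(\T^{d_i})\subset C^{0,\alpha}_{\mX_i}(\T^{d_i})$ from \eqref{Holder-relations} this yields $V^i[\,\cdot\,]\in C^{\alpha}_{\mX_i}(\T^{d_i})$ with a uniform bound.

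For part $(i)$ I would reproduce the two-step scheme of Section~4 componentwise. For fixed $\rho>0$, consider the discounted $N$-player system obtained from \eqref{sys-N} by replacing each $\lambda_i$ by $\rho u_i$ (cf.\ \eqref{sys-rho}), and run the Schauder argument of the proof of Theorem~\ref{dsqg} on the product space $\mathcal{B}=\prod_{i=1}^{N}C^{1,\alpha}_{\mX_i}(\T^{d_i})$: given $(v_i)_i\in\mathcal{B}$, Lemma~\ref{adj-prob} yields a unique $m_i\in H^1_{\mX_i}(\T^{d_i})\cap L^\infty$ solving the $i$-th equation for $m_i$ with drift $g_i(\cdot,D_{\mX_i}v_i)$, and then Lemma~\ref{ll-dp} yields a unique $u_i\in C^2_{\mX_i}(\T^{d_i})$ solving $\mL_i u_i+\rho u_i+H_i(x,D_{\mX_i}v_i)=V^i[(m_j)_{j\ne i}]$; the map $(v_i)_i\mapsto(u_i)_i$ is continuous and compact, and the maximum-principle bound $\rho\|u_i\|_\infty\le\sup\|V^i[\,\cdot\,]-H_i(\cdot,0)\|_\infty$ together with Theorem~\ref{Holder_regularity_semilinear} bounds the set of its $s$-fixed points, so \cite[Theorem~11.3]{GT} produces a solution $(u^\rho_i,m^\rho_i)_{i}$. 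One then sends $\rho\to0^+$ exactly as in Theorem~\ref{erg-thm-lin}: the blow-up/contradiction argument leading to \eqref{stima-r} applies verbatim to each $u^\rho_i$ (its equation has the scalar structure treated there and $V^i[\,\cdot\,]$ is uniformly bounded), giving $\|u^\rho_i-\langle u^\rho_i\rangle\|_\infty\le C$ uniformly in $\rho$; then Theorems~\ref{Holder_regularity_semilinear} and \ref{Holder_regularity_linear} bound $w^\rho_i:=u^\rho_i-\langle u^\rho_i\rangle$ in $C^{2,\alpha}_{\mX_i}$, Lemma~\ref{adj-prob} bounds $m^\rho_i$ in $H^1_{\mX_i}$, and $\{\rho\langle u^\rho_i\rangle\}$ is bounded in $\R$; extracting a subsequence $\rho_n\to0$ and passing to the limit (using the $L^2$-convergence of $g_i(\cdot,D_{\mX_i}w^{\rho_n}_i)$ in the weak formulation of the equations for $m_i$, as in Theorem~\ref{erg-thm-lin}) yields a solution of \eqref{sys-N}.

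For part $(ii)$, let $(\lambda_i,u_i,m_i)_i$ solve \eqref{sys-N} and set $\hat\alpha^i(x)=\bar\alpha^i\big(x,D_{\mX_i}u_i(x)\big)$. Since \eqref{dyn-g} decouples, each $\hat\xi^i$ is the solution of \eqref{dyn} on $\T^{d_i}$ driven by the feedback $\bar\alpha^i(\cdot,D_{\mX_i}u_i)$, and by Lemma~\ref{adj-prob} and Theorem~\ref{erg-thm} its (unique, initial-condition-independent) ergodic measure is $m_i$. Now fix $i$, keep the opponents at $\hat\alpha^{j}$ ($j\ne i$) and let player $i$ use an arbitrary $\alpha^i\in\mA_i$; by the independence of $\xi^1,\dots,\xi^N$ and the ergodicity of each $\hat\xi^j$, $j\ne i$, the long-run average in \eqref{cost-i} of $F_i(\xi^1_t,\dots,\xi^N_t)$ reduces to that of $V^i[(m_j)_{j\ne i}](\xi^i_t)$, so $J_i(\hat\alpha^1,\dots,\alpha^i,\dots,\hat\alpha^N)$ equals the ergodic cost of a \emph{single}-agent control problem on $\T^{d_i}$ with Lagrangian $L_i(x,a)+V^i[(m_j)_{j\ne i}](x)$ and dynamics \eqref{dyn}. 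But the $i$-th line of \eqref{sys-N} is precisely the system \eqref{sys-erg} for that problem (with $H=H_i$, $g=g_i$, $V=V^i[(m_j)_{j\ne i}]$, $\mL=\mL_i$), so Lemma~\ref{ver-P} gives that $\hat\alpha^i$ is optimal for player $i$, that $\lambda_i=J_i(\hat\alpha^1,\dots,\hat\alpha^N)$, and that $m_i$ is the ergodic measure of $\hat\xi^i$; since $i$ was arbitrary, $(\hat\alpha^1,\dots,\hat\alpha^N)$ is a Nash equilibrium with the stated properties.

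The only genuinely delicate point in the whole argument is the uniform-in-$\rho$ oscillation bound \eqref{stima-r} for the components $u^\rho_i$ and its propagation through the subelliptic H\"older and Schauder estimates; as in Theorem~\ref{erg-thm-lin} this rests on the blow-up argument and on Bony's strong maximum principle for the sub-Laplacians $\mL_i$ (see \cite{Bo69,BL03}). A secondary, classical point is the reduction of player $i$'s cost to a single-agent cost in part $(ii)$, which is exactly the ergodic-averaging mechanism already exploited in the proof of Lemma~\ref{ver-P}.
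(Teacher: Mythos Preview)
Your proposal is correct and follows essentially the same approach as the paper: the paper only gives a brief sketch, noting that $(i)$ is a corollary of Theorem~\ref{erg-thm-lin} (with the fixed-point argument run on the product space $\prod_{i=1}^N C^{1,\alpha}_{\mX_i}(\T^{d_i})$, exactly as you do) and that $(ii)$ is a standard verification argument via It\^o's formula and the ergodic theorem for H\"ormander diffusions, referring to \cite{BF16} for details in the uniformly elliptic case. Your write-up fleshes out these references along the intended lines, including the check that each $V^i$ satisfies {\bf (IV)} and the reduction of player $i$'s cost to a single-agent ergodic problem via the exponential ergodicity of the opponents' processes $\hat\xi^j$, $j\neq i$.
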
 
Statement (i) is a corollary of Theorem~\ref{erg-thm-lin}. 
Statement (ii) is the analog of a so-called verification theorem in optimal control and differential games. As hinted in \cite{LL-jj} the proof is standard and relies here also on an ergodic theorem for 
H\"ormander diffusions. See \cite{BF16} for more details in the case of uniformly elliptic diffusions.

\subsection{Mean Field Games as limit of $N$-player games}
Considering the same game  as in the previous subsection,
the goal  is to let the number of the players  $N\to \infty$ .  In order to be able to pass to the limit as $N\to \infty$ we have to make two additional assumptions.
\begin{enumerate}
\item The players are {\em similar}. Mathematically this means that we are assuming that  
all $d_i$, $\mX_i$, $H_i$, $L_i$, $F_i$ are the same, that is, independent of $i$. 
Being similar, the agents will reason ``similarly'', so we can assume also that
$
\bar\alpha^i = \bar\alpha \quad \forall i=1, \ldots, N. 
$
As a consequence all $H_i$ and all $g_i$ will be the same, and we can call them $H$ and $g$. 
\item Since players are ``small'' and their number is ``large'', each player can only have a ``statistical visibility'' of the game, 
each player cannot know all the individual states  of the agents taking part in the game, but he knows, for example, the average or their states 
(some ``macroeconomic parameter'' say, that can somehow be measured or estimated).  
Mathematically this can be expressed assuming:
\[
F(x^1, \ldots, x^N) = W\left[ \frac1N \sum_{j=1}^N \delta_{x^j} \right](x^i), \qquad \forall (x^1, \ldots x^N) \in (\T^d)^N, 
\]
for some map
$
W\colon P(\T^d) \to L^\infty(\T^d) \,, 
$
which we assume satisfying condition {\bf (IV)},
(and  $\delta_x$ denotes the usual Dirac delta measure). 
Thus we are assuming that each agent   designs his cost 
 as a function of the {\em empirical average} $\frac1N \sum_{j=1}^N \delta_{x^j} $ of the states of all the agents  and  of its own state $x^i$ (after all, it is reasonable to expect that each knows at least his own state $x^i$ and treats it separately from the states of the rest of the agents)
\end{enumerate}
Under these assumptions system \eqref{sys-N} reduces to 
\bel{sys-N-s}\left\{\begin{aligned}&\mL u_i + \l_i + H(x, D_{\mX} u_i ) = V[m_1, \ldots, m_N] \quad\text{in } \T^d,
   \\ & \mL^*m_i - \textrm{div}_{\mX^*} \big( m_i g \big( x_i, D_{\mX}u_i  \big ) \big) =0 \quad \text{in } \T^{d} \qquad i=1, \ldots, N\,,\\ 
	&\int_{\T^{d}}u_i \, dx=0 \, , \quad \int_{\T^d}m_i \, dx=0 , \quad m_i >0,  \end{aligned}\right. 
\eeq 
with 
\bel{V-def}
V[m_1, \ldots, m_N](x^i) =
  \int_{   \T^{d(N-1)}  } W\left[ \frac1N \sum_{j=1}^N \delta_{x^j}   \right](x^i) \prod_{\substack{ 1\le j \le N \\ j \neq i  } } \, m_j(dx^j)
\quad \forall x^i \in \T^d\,. 
\eeq
\begin{rem}[Existence of symmetric solutions]
Under the above assumptions it is easy to adapt the proof of 
Theorem~\ref{Nash}~(i) or of Theorem~\ref{erg-thm-lin} in order to show 
that the system of PDEs  \eqref{sys-N-s} has a symmetric solution
$(\l, \ldots, \l) \in\R^N$, $(u, \ldots, u) \in C^2_{\mX}(\T^d)$ and 
$(m, \ldots, m) \in \big( H^1_{\mX} \cap L^\infty(\T^d) \big)^N$. 
\end{rem}
System~ \eqref{sys-N-s} does not have a unique solution in general. However, all its solutions symmetrise as $N\to \infty$, (as shown in the following theorem). 
Moreover, the limit points  
of these solutions satisfy the system of MFG equation. 
We  recall that $P(\T^d) \subset C(\T^d)^*$ is a compact topological space for the topology of weak $^*$-convergence (Prokhorov's Theorem). 
Moreover, this topology is metrizable, for example, by the {\em Kantorovich-Rubinstein} distance $\bd(m_1, m_2)$ defined for $m_1, m_2 \in P(\T^d)$ as
$
\bd(m_1, m_2) = \sup\left\{  \int_{\T^d} f(x) \, d(m_1-m_2)  \; : \; f\in C^{0,1}, \; {\rm Lip}\,(f) \le 1 \right\}.
$

\begin{thm}[Symmetrisation and MFG limit]
Let $(\lambda_1^N, \ldots, \lambda_N^N) \in \R^N$, $(u_1^N, \ldots, u_N^N) \in C^2_{\mX}(\T^d)$,
$(m_1^N, \ldots, m_N^N)\in \big(H^1_{\mX}(\T^d) \cap L^\infty(\T^d)\big)^N$ be a solution to  \eqref{sys-N} -
\eqref{V-def}.  Then 
 \begin{enumerate}[$(i)$]
\item $\{ (\l_i^N, u_i^N, m_i^N)\}_{N\ge i}$ is precompact in $\R\times C^2_{\mX}(\T^d) \times P(\T^d)$
for every $i\in \N$. 
\item (symmetrisation:)
$
\lim_{N \to \infty} \left(  |\lambda_i^N- \lambda_j^N| +\|u_i^N-u_j^N\|_{C_{\mX}^2(\T^d)} + \,d(m_i^N, m_j^N) \right) =0\,. 
$
\item Let $(\l, u, m)$ be a limit point of $\{ (\l_i^N, u_i^N, m_i^N)\}_{N\ge i}$ for some $i\in \N$. Then $(\l, u, m)$
is a solution of the MFG system
\bel{sys-W}
\left\{\begin{aligned} 
&\mL u + \lambda + H(x, D_{\mX}u ) = W[m] \text{ in } \T^d \\
&\mL^*u - \textrm{div}_{\mX^*} \big( m g(x, D_{\mX}u  )\hat \alpha ) \big)=0 \text{ in }\T^d \\
&\int_{\T^d}u \, dx=0, \quad \int_{\T^d}m \,dx=1, \quad m >0\,. 
\end{aligned} \right.
\eeq 
\end{enumerate} 
\end{thm}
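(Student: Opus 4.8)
The plan is to follow the classical Lasry--Lions scheme relating $N$-player games to Mean Field Games (see \cite{LL-cs,LL-jj}, and \cite{Fe13} for the ergodic case with several populations), checking that at each step the subelliptic a priori estimates of Section~2 and of Lemma~\ref{adj-prob} can replace their uniformly elliptic counterparts. First I would establish uniform bounds. Evaluating the $i$-th Hamilton--Jacobi--Bellman equation in \eqref{sys-N-s} at a maximum and at a minimum point of $u_i^N$, where $D_{\mX}u_i^N$ vanishes and $\mL u_i^N\ge 0$ (resp.\ $\le 0$), gives $|\lambda_i^N|\le \|H(\cdot,0)\|_\infty+\sup_m\|W[m]\|_\infty$, which is finite and independent of $i,N$ by \textbf{(IV)} and the structure \eqref{V-def} of the coupling. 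The oscillation bound $\|u_i^N-\langle u_i^N\rangle\|_\infty\le C$, uniform in $i$ and $N$, is obtained exactly as estimate \eqref{stima-r} in the proof of Theorem~\ref{erg-thm-lin}: if it failed one renormalises, uses the linear growth \textbf{(II-L)} to control the Hamiltonian term, applies the equi-H\"older estimate of \cite[Theorem~17]{Xu90-var}, passes to a uniform limit $\psi$ with $\|\psi\|_\infty=1$ solving in the viscosity sense $\mL\psi\mp C|D_{\mX}\psi|=0$, and contradicts the strong maximum principle of \cite{Bo69,BL03}, since $\psi$ also vanishes at some point. Together with $\int u_i^N\,dx=0$ this gives a uniform $L^\infty$ bound on $u_i^N$; then Theorem~\ref{Holder_regularity_semilinear} gives a uniform $C^{1,\alpha}_{\mX}$ bound, hence $g(\cdot,D_{\mX}u_i^N(\cdot))$ is uniformly H\"older, hence Lemma~\ref{adj-prob} yields $\delta_0\le m_i^N\le\delta_1$ and a uniform $H^1_{\mX}$ bound on $m_i^N$, and finally the Schauder estimate of Theorem~\ref{Holder_regularity_linear}, applied after writing the HJB equation in divergence form, upgrades $u_i^N$ to a uniform $C^{2,\alpha}_{\mX}$ bound. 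Since $C^{2,\alpha}_{\mX}(\T^d)\hookrightarrow\hookrightarrow C^2_{\mX}(\T^d)$, $\R$ is locally compact and $P(\T^d)$ is sequentially compact for weak-$*$ convergence, part $(i)$ follows; moreover by Lemma~\ref{comp-emb} the sequence $\{m_i^N\}$ is also precompact in $L^p(\T^d)$ and weakly in $H^1_{\mX}(\T^d)$.

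For the symmetrisation $(ii)$ the key observation is that replacing one atom of $\frac1N\sum_j\delta_{x^j}$ costs $O(1/N)$ in the Kantorovich--Rubinstein distance, so, using the uniform bounds above together with a quantitative law of large numbers for the empirical measure of the independent samples $x^j\sim m_j^N$, one gets $\|V[m_1^N,\dots,m_N^N]-W[\bar m^N]\|_\infty\to0$ uniformly in $i$, where $\bar m^N:=\frac1N\sum_{j=1}^N m_j^N$; the crucial point is that this right-hand side is now the \emph{same} for all players. Extracting a subsequence along which $\lambda_i^N\to\lambda$, $u_i^N\to u$ in $C^2_{\mX}(\T^d)$ and $\bar m^N\to\bar m$ in $L^p(\T^d)$ and in $P(\T^d)$, and using that $W$ is continuous and regularising (so $W[\bar m^N]\to W[\bar m]$ in $C(\T^d)$), passing to the limit in the $i$-th HJB equation shows that every subsequential limit $(\lambda,u)$ solves the single ergodic cell problem $\mL u+\lambda+H(x,D_{\mX}u)=W[\bar m]$ with $\int u\,dx=0$. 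For this problem the ergodic constant is unique and the solution is unique up to additive constants: evaluating the difference of two solutions at its extrema, where the horizontal subgradients and hence the Hamiltonian values coincide, reduces matters to the strong maximum principle of \cite{Bo69,BL03} applied to the operator $\mL+b\cdot D_{\mX}$ obtained by linearising $H$. Hence all the limits $\lambda_i$ coincide and, after the normalisation, so do all the limits $u_i$; feeding the common limit $u$ into the Kolmogorov--Fokker--Planck equation and invoking the uniqueness statement of Lemma~\ref{adj-prob} forces the limits $m_i$ to coincide as well. A standard subsequence argument (every subsequence has a further subsequence along which the three differences converge, necessarily to $0$) then yields $(ii)$, with the convergence in $d(m_i^N,m_j^N)$ following from weak-$*$ convergence of the $m_i^N$.

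For the Mean Field Game limit $(iii)$, let $(\lambda,u,m)$ be a limit point of $\{(\lambda_i^N,u_i^N,m_i^N)\}_{N\ge i}$, realised along a subsequence $N_k$. By the (uniform) symmetrisation of the previous step one also has $\bar m^{N_k}\to m$, so the analysis above shows that $(\lambda,u)$ solves $\mL u+\lambda+H(x,D_{\mX}u)=W[m]$ with $\int u\,dx=0$. For the adjoint equation, $g(\cdot,D_{\mX}u_i^{N_k}(\cdot))\to g(\cdot,D_{\mX}u(\cdot))$ uniformly, since $D_{\mX}u_i^{N_k}\to D_{\mX}u$ uniformly and $g$ is continuous, while $m_i^{N_k}\to m$ weakly in $H^1_{\mX}(\T^d)$ and strongly in $L^p(\T^d)$; passing to the limit in the weak formulation of the Kolmogorov--Fokker--Planck equation exactly as at the end of the proof of Theorem~\ref{erg-thm-lin} shows that $m$ is a weak solution of $\mL^*m-\textrm{div}_{\mX^*}(m\,g(x,D_{\mX}u))=0$, and the constraints $\int m\,dx=1$, $m\ge\delta_0>0$ pass to the limit by $L^1$-convergence. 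Thus $(\lambda,u,m)$ solves \eqref{sys-W}.

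I expect the main obstacle to be the quantitative mean-field estimate in the second step, namely controlling $\|V[m_1^N,\dots,m_N^N]-W[\bar m^N]\|_\infty$: this requires a rate of convergence of the empirical measure of independent but non-identically distributed samples to its mean in the Kantorovich--Rubinstein distance (dimension-dependent, but vanishing as $N\to\infty$), combined with the uniform continuity of $W$ on the compact space $P(\T^d)$. Everything else is either a direct application of the subelliptic estimates of Section~2 and of Lemma~\ref{adj-prob}, or a repetition of arguments already carried out in the proofs of Theorems~\ref{dsqg} and \ref{erg-thm-lin}.
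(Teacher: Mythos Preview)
Your proposal is correct and follows essentially the same scheme as the paper's (very brief) proof: uniform a~priori estimates for~$(i)$, uniqueness/continuous dependence of the HJB and KFP problems for~$(ii)$, and a law of large numbers for~$(iii)$, all transplanted to the subelliptic setting via Theorems~\ref{Holder_regularity_linear}, \ref{Holder_regularity_semilinear} and Lemma~\ref{adj-prob}.

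There is one organisational difference worth noting. For the symmetrisation step~$(ii)$ the paper (following \cite{Fe13}) does \emph{not} invoke a law of large numbers: instead it compares the $i$-th right-hand side
\[
V_i^N(x)=\int_{\T^{d(N-1)}}W\!\left[\tfrac1N\!\sum_{j\ne i}\delta_{x^j}+\tfrac1N\delta_x\right]\!(x)\prod_{j\ne i}m_j^N(dx^j)
\]
with the auxiliary, $i$-independent quantity
\[
R^N(x)=\int_{\T^{dN}}W\!\left[\tfrac1N\!\sum_{j=1}^N\delta_{x^j}\right]\!(x)\prod_{j=1}^N m_j^N(dx^j),
\]
and the elementary bound $\mathbf d\!\big(\mu,\mu+\tfrac1N(\delta_x-\delta_{x^i})\big)\le\tfrac1N\,\mathrm{diam}(\T^d)$ together with the uniform continuity of $W$ on $P(\T^d)$ already gives $\|V_i^N-R^N\|_\infty\le\omega_W(\mathrm{diam}(\T^d)/N)$ uniformly in $i$; continuous dependence then forces all $(\lambda_i^N,u_i^N,m_i^N)$ close to the solution of the single auxiliary system with right-hand side $R^N$. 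The law of large numbers is reserved for~$(iii)$, where one must identify the limit of $R^N$ with $W[m]$. You instead jump directly to $W[\bar m^N]$ in step~$(ii)$, which requires the LLN there already. Both routes work; the paper's buys a cleaner separation of the two phenomena (symmetrisation is a pure $O(1/N)$ effect, independent of any probabilistic concentration), while yours makes the passage from~$(ii)$ to~$(iii)$ immediate once $\bar m^N\to m$ is established. Your uniqueness argument for the ergodic cell problem (needed for the subsequence argument) is fine here because in the game-theoretic setting of the Appendix $H$ is convex with linear growth, hence globally Lipschitz in $q$, so the difference of two solutions is a sub- and supersolution of $\mL w\mp C|D_{\mX}w|=0$ and Bony's strong maximum principle applies.
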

\begin{proof}
(i) is a consequence of the a priori estimates for solutions of system \eqref{sys-N-s}, which one can easily show that under the current assumptions hold true with constants independent of $N$. 
 As hinted in~\cite{LL-cs}, the proof of (ii) relies on  the uniqueness and continuous dependence of solutions of HJB and KFP equations on the data, while for the proof of (iii) one needs also a law or large numbers. 
For additional details 
we refer to \cite{Fe13}. 
\end{proof}

 \begin{rem}
\begin{enumerate}
\item[(i)]
If the MFG system has a unique solution $(\l, u, m)$  (see Theorem~\ref{uniq-thm} for sufficient conditions), then clearly
$(\l_i^N, u_i^N, m_i^n) \to (\l, u, m)$, as $N\to \infty$, for every $i\in \N$. 
\item[(ii)]
Solving the system of PDEs \eqref{sys-W}, for every $\e>0$, we can build symmetric $\e$-Nash equilibria  
for the $N$-player game,  provided $N$ is sufficiently large. 
\end{enumerate} 
\end{rem}

\bibliographystyle{plain}
\def\cprime{$'$}

\end{document}